\newtheorem{theorem}{Theorem}
\newtheorem{prop}{Proposition}
\newtheorem{lemma}{Lemma}
\newtheorem{rem}{Remark}
\newtheorem{exmp}{Example}
\begin{document}

\title{Graphs related to $2$-dimensional simplex codes}
\author{Mariusz Kwiatkowski, Mark Pankov}

\keywords{}
\address{Faculty of Mathematics and Computer Science, 
University of Warmia and Mazury, S{\l}oneczna 54, Olsztyn, Poland}
\email{mkw@matman.uwm.edu.pl, pankov@matman.uwm.edu.pl}

\maketitle
\begin{abstract}
We give a complete description of the distance relation on the graph of $4$-ary simplex codes of dimension $2$.
This is a connected graph of diameter $3$.
For every vertex we determine the sets of all vertices at distance $i\in\{1,2,3\}$ and describe their symmetries.
\end{abstract}

\maketitle
\section{Introduction}
The distance between two vertices in a connected graph is equal to the number of edges in a shortest path connecting these vertices. 
For a connected subgraph the distance between two vertices (in this subgraph)  is equal to or greater than the distance between these vertices in the graph.
We consider the Grassmann graph consisting of all linear $[n,k]_q$ codes,
i.e. $k$-dimensional subspaces of an $n$-dimensional vector space over the $q$-element field, 
and  its subgraphs formed by codes of special type (non-degenerate, projective, simplex). 
Two distinct codes are adjacent vertices of the Grassmann graph if they have the maximal possible number of common codewords,
i.e. the dimension of their intersection is equal $k-1$.
The case when $k=1,n-1$ is trivial (any two distinct vertices of the Grassmann graph are adjacent) 
and we will always suppose that $1<k<n-1$.

The subgraph of non-degenerated linear $[n,k]_q$ codes is investigated in \cite{KP}.
It is not difficult to prove that this subgraph is connected and the distance between any two vertices is not greater than $i+1$,
where $i$ is the distance between these vertices in the Grassmann graph. 
The main result of \cite{KP} states that the distance  in the subgraph (for any pair of vertices)
coincides  with the distance in the Grassmann graph if and only if 
$$ n<(q+1)^2 +k-2.$$
The subgraph of projective codes is considered in  \cite{KPP}.
If the number of elements in the field is sufficiently large, i.e. $q\ge \binom{n}{2}$, then 
this graph is connected and all distances coincide with the distances in the Grassmann graph.
Projective $[n,k]_q$ codes exist if and only if
$$\frac{q^k-1}{q-1}\ge n;$$
in the case when this is an equality, projective codes are called {\it $q$-ary simplex codes of dimension} $k$. 

Simplex codes are interesting for many reasons.
They are dual to well-known Hamming codes and are first-order Reed--Muller codes \cite[Subsection 1.2.2]{TVN}.
The associated projective systems are projective spaces which guarantees that
all simplex codes with the same parameters are equivalent, i.e. the group of monomial linear automorphism acts transitively on the set of such codes.

The inequality $q\ge \binom{n}{2}$ does not hold for $q$-ary simplex codes of dimension $k$
and there are pairs of such codes for which  the distance  in the subgraph of simplex codes  is greater than 
the corresponding distance in the Grassmann graph (this happens, for example, if $k=4, q=2$ or $k=3, q=3$, see \cite{KPP}).
On the other hand, for $k=3,q=2$ and $k=2,q=3$ the subgraph of simplex codes is isomorphic to the dual polar graph ${\textsf D}_{k}(q)$ 
(see \cite{KPP} and Example \ref{exmp-grid}, respectively).
This  follows easily from the fact that simplex codes can be characterized as maximal linear subspaces contained in a certain algebraic variety.
In the general case, the corresponding system of polynomial equations is complicated 
(for example, the number of equation is $mk-m$ if $q=p^m$ and $p$ is a prime number).
This makes the general treatment of the subgraph of simplex codes highly  problematic. 

In the present paper, we focus on the subgraph of $q$-ary simplex codes of dimension $2$.
The case when $q=3$ is simple (it was noted above). 
For $q=4$ we give a complete description of the distance relation on this subgraph  (Theorem \ref{theorem1}).
This is a connected graph of diameter $3$ (note that the diameter of the corresponding Grassmann graph is $2$).
For every vertex we determine the sets of all vertices at distance $i\in\{1,2,3\}$.  
The second result (Theorem \ref{theorem2}) concerns the symmetries of these sets.
By duality, we can obtain the same description for the subgraph of $4$-ary Hamming codes of dimension $3$.

Our arguments are based on the following interesting properties of the field with four elements:
\begin{enumerate}
\item[$\bullet$] For the $q$-element field with $q>2$ the sum of all $q-1$ non-zero elements is zero,
but only  for $q=3,4$ the sum of $q-1$ non-zero elements is zero {\it if and only if} these elements are mutually distinct.
\item[$\bullet$] The group ${\rm P\Gamma L}(2,4)$ is isomorphic to the permutation group ${\rm S}_5$.
\end{enumerate}
We are not able to extend the methods on the case when $q>4$ 
(see Proposition \ref{prop-ad} and Remark \ref{rem-ad} for the explanation).

\section{Basic objects}
\subsection{Grassmann graph}
Consider an $n$-dimensional vector space $V={\mathbb F}^{n}$
over the finite field ${\mathbb F}=\mathbb{F}_{q}$ consisting of $q$ elements. 
The standard basis of $V$ is formed by the vectors 
$$e_{1}=(1,0,\dots,0),\dots,e_{n}=(0,\dots,0,1).$$
We write $c_{i}$ for the $i$-th coordinate functional $(x_{1},\dots,x_{n})\to x_{i}$
and denote by $C_{i}$ the hyperplane of $V$ which is the kernel of $c_i$.
Recall that an $m$-dimensional vector space over ${\mathbb F}_q$ contains precisely 
$$[m]_{q}=\frac{q^{m}-1}{q-1}=q^{m-1}+\dots+q+1$$
distinct $1$-dimensional subspaces and this number coincides with the number of hyperplanes in this vector space.

The {\it Grassmann graph} $\Gamma_{k}(V)$ is the simple graph whose vertices are all $k$-dimen\-sional subspaces of $V$. 
Two $k$-dimensional  subspaces are adjacent vertices of this graph (connected by an edge)
if their intersection is $(k-1)$-dimensional.
In the case when $k=1,n-1$, any two distinct vertices of $\Gamma_{k}(V)$ are adjacent. 
In what follows, we will always assume that $1<k<n-1$. 

For a $(k-1)$-dimensional subspace $X\subset V$ and a $(k+1)$-dimensional subspace $Y\subset V$
the {\it star} ${\mathcal S}(X)$ and the {\it top} ${\mathcal T}(Y)$ consist of all $k$-dimensional subspaces 
containing $X$ and contained in $Y$, respectively. 
These are cliques in $\Gamma_{k}(V)$  (i.e. any two distinct elements are adjacent vertices).
Conversely, every clique of $\Gamma_{k}(V)$ is contained in a star or a top 
which means that stars and tops are maximal cliques of the Grassmann graph \cite[Proposition 3.3]{Pank-book}.

The distance between two $k$-dimensional subspaces of $V$ in $\Gamma_{k}(V)$ is equal to $k-\dim(X\cap Y)$.

\subsection{Simplex codes}
Let $C$ be a linear $[n,k]_{q}$ code, i.e. a $k$-dimensional subspace of $V$.
We assume that this code is non-degenerate which means that the restriction of every coordinate functional to $C$ is non-zero.
Suppose that $x_{1},\dots,x_{k}$ is a basis of $C$ and $x^{*}_{1},\dots,x^{*}_{k}$ is the dual basis of $C^{*}$
satisfying $x^{*}_{i}(x_{j})=\delta_{ij}$ (Kronecker delta).
If $M$ is the generator matrix of $C$ whose rows are $x_{1},\dots,x_{k}$ and
$(a_{1j},\dots, a_{kj})$ is the $j$-column of $M$, then 
$$c_{j}|_{C}= a_{1j}x^{*}_{1}+\dots+a_{kj}x^{*}_{k}.$$
Denote by $P_{i}$ the $1$-dimensional subspace of the dual vector space $C^{*}$ containing $c_{i}|_{C}$.
The collection ${\mathcal P}(C)=\{P_{1},\dots,P_{n}\}$ is known as the {\it projective system} associated to $C$ \cite{TVN}.
In the general case, the equality $P_{i}=P_{j}$ is possible for some distinct $i,j$.
Our next assumption is the following: $n=[k]_q$ and $P_{1},\dots, P_{n}$ are mutually distinct.
In this case, $C$ is called a {\it $q$-ary simplex code of dimension $k$}.
Under the assumption that $n=[k]_q$ the following three conditions are equivalent:
\begin{enumerate}
\item[$\bullet$] $C$ is a simplex code.
\item[$\bullet$] there is a one-to-one correspondence between $P_{1},\dots,P_{n}$
and the points of the projective geometry ${\rm PG}(k-1,q)$, in other words,
the projective system of $C$ is ${\rm PG}(k-1,q)$.
\item[$\bullet$] All columns in a generator matrix of $C$ are mutually non-proportional
(if this condition holds for a certain generator matrix, then it holds for all generator matrices).
\end{enumerate}
Note that $P_{i}\ne P_{j}$ is equivalent to the fact that $C\cap C_{i}$ and $C\cap C_{j}$ are distinct hyperplanes of $C$.
Since $C$ contains precisely $[k]_q=n$ hyperplanes, 
every hyperplane of $C$ is $C\cap C_{i}$.
Recursively, we establish that 
every $m$-dimensional subspace of $C$ is the intersection of precisely $[k-m]_q$ distinct $C\cap C_i$.
Taking $m=1$ we obtain that
every non-zero codeword $x\in C$ has precisely $[k-1]_q$ coordinates equal to $0$.

Recall that a map $l:V\to V$ is called {\it semiliear} if $$l(x+y)=l(x)+l(y)$$ 
for all $x,y\in V$ and there is an automorphism $\sigma$ of ${\mathbb F}_q$ such that
$$l(ax)=\sigma(a)l(x)$$ for all $x\in V$ and $a\in {\mathbb F}_q$
(if $\sigma$ is identity, then $l$ is linear). 
Note that non-identity automorphisms of ${\mathbb F}_{q}$ exist only in the case when $q$ is not a prime number. 
A semilinear automorphism of $V$ is said to be {\it monomial} if it sends every $e_i$ to a non-zero scalar multiple of $e_{j}$.

An {\it isomorphism} between non-degenerate codes $C,C'\subset V$ is a semilinear isomorphism of $C$ to $C'$
which can be extended to a monomial semilinear  automorphism of $V$. 
If one of these codes is simplex, then the same holds for the other and such an extension is unique. 
Indeed, if the restriction of a monomial semilinear automorphism of $V$ to a simplex code $C$ is identity,
then it transfers every $e_i$ to $a_{i}e_{i}$ (this follows from the fact that every hyperplane of $C$ is $C\cap C_{i}$
and the hyperplanes $C\cap C_i$, $C\cap C_j$ are distinct if $i\ne j$) and it is clear that each $a_i$ is $1$.

For every code isomorphism $v:C\to C'$ the contragradient ${\check v}: C^*\to C'^*$ (the inverse of the adjoint map $v^{*}$ \cite[Section 1.7]{Pank-book})
sends ${\mathcal P}(C)$ to ${\mathcal P}(C')$.
Conversely, if a semilinear isomorphism $u:C^*\to C'^*$ transfers ${\mathcal P}(C)$ to ${\mathcal P}(C')$,
then ${\check u}:C\to C'$ is a code isomorphism  \cite[Lemma 2.20]{Pank-book}.

It was noted above that $q$-ary simplex codes of dimension $k$ can be characterized as codes whose projective systems are ${\rm PG}(k-1,q)$.
This implies that the group of monomial linear automorphisms of $V$ acts transitively on the set of all simplex codes, i.e. any two such codes are equivalent. 
The automorphism group of any simplex code $C\subset V$ is isomorphic to $\Gamma {\rm L}(k,q)$
and the group of linear automorphisms is isomorphic to ${\rm GL}(k,q)$.
Since the group of monomial linear automorphisms of $V$ and the group ${\rm GL}(k,q)$ are of orders 
$$n!(q-1)^n\;\mbox{ and }\;(q^{k}-1)(q^k -q)\dots(q^{k}-q^{k-1})$$
(respectively), there are precisely
$$\frac{n!(q-1)^n}{(q^{k}-1)(q^k -q)\dots(q^{k}-q^{k-1})}$$
 $q$-ary simplex codes of dimension $k$.

We say that $x\in V$ is a {\it simplex vector} if this vector has precisely $[k-1]_q$ coordinates equal to $0$.
A non-zero vector is simplex if and only if it is a codeword of a certain simplex code.
Indeed, all codewords of simplex codes are simplex vectors (it was noted above)
and the group of monomial linear automorphisms of $V$ acts transitively on the set of all simplex vectors and the set of all simplex codes.
A subspace of $V$ is a simplex code 
if and only if it is maximal with respect to the property that all non-zero vectors are simplex 
(see \cite{B} or \cite[Theorem 7.9.5]{HP-book}).
By \cite{KPP}, the set of all simplex vectors $x=(x_{1},\dots, x_{n})$ can be characterized by the following equations
$$\sum_{i_1<\dots< i_{p^{j}}}x_{i_1}^{q-1}\dots\; x_{i_{p^{j}}}^{q-1}  =  0$$
for $j\in\{0,\dots,mk-m-1\}$, where $q=p^m$ and $p$ is a prime number.

\section{The graphs of $2$-dimensional simplex codes}
In this section, we suppose that $k=2$. Then $n=q+1$ and a vector of $V$ is simplex if and only if precisely one of its coordinates  is zero.
For a non-zero vector $(a_{1},\dots,a_{n})\in V$ we denote by $\langle a_{1},\dots, a_{n}\rangle$
the $1$-dimensional subspace containing this vector.
We will use the projective geometry language and
call $1$-dimensional and $2$-dimensional subspaces of $V$ {\it points} and {\it lines}, respectively;
in particular, $1$-dimensional subspaces containing simplex vectors are said to be {\it simplex points}.
Each line consists of $n=q+1$ points.
Simplex codes correspond to lines formed by simplex points and every such line will be called {\it simplex}.
Every point of a simplex line is the intersection of this line with a unique coordinate hyperplane.

\begin{exmp}\label{exmp1}{\rm
Suppose that $q=4$.
Consider the line containing the simplex points 
$$\langle 0,1,1,1,1\rangle\;\mbox{ and }\;\langle 1,0,1,\alpha, \alpha^2 \rangle,$$
where $\alpha$ is a primitive element of ${\mathbb F}_4$.
The remaining three points on this line are
$$\langle 1,1,0,\alpha^2,\alpha\rangle,\;\langle 1,\alpha,\alpha^2,0,1\rangle,\;
\langle 1,\alpha^2, \alpha, 1,0 \rangle$$
and the line is simplex.
}\end{exmp}

We denote by $\Gamma$ the restriction of the Grassmann graph $\Gamma_{2}(V)$
to the set of all simplex lines and say that two simplex lines are {\it adjacent} if they are adjacent vertices of the graph $\Gamma$.
For some pairs of distinct simplex points the line containing them is not simplex,
two distinct simplex points are said to be  {\it adjacent} if they are connected by a simplex line. 

\begin{prop}\label{prop-element}
The following assertions are fulfilled:
\begin{enumerate}
\item[(1)] Every simplex point is contained in precisely $(q-1)!$ simplex lines.
\item[(2)] The degree of every vertex of the graph $\Gamma$ is equal to $(q+1)[(q-1)! -1]$.
\end{enumerate}
\end{prop}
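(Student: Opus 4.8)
The plan is to prove (1) by directly parametrizing the simplex lines through a fixed simplex point, and then to deduce (2) from (1) by a short incidence count. For (1), I would first invoke the transitivity of the group of monomial linear automorphisms on simplex vectors (recalled in Section 2) to assume without loss of generality that the given point is $P=\langle 0,1,\dots,1\rangle$, so that $P=L\cap C_1$ for every simplex line $L$ through it. Since each point of a simplex line is the intersection with a unique coordinate hyperplane and $L$ has $q+1=n$ points while there are $n$ coordinate hyperplanes, this assignment is a bijection; in particular $Q:=L\cap C_2$ is a well-defined point, and because $Q$ is simplex with its single zero in position $2$ its first coordinate is non-zero, so $Q$ has a unique representative $(1,0,c_3,\dots,c_n)$. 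Thus every simplex line through $P$ equals $\langle P,Q\rangle$ for a uniquely determined tuple $(c_3,\dots,c_n)$, and conversely each such line is recovered from its tuple. Writing a general codeword as $sP+tQ=(t,s,s+tc_3,\dots,s+tc_n)$, I would check that the simplex condition (exactly one zero coordinate for every non-zero codeword) is automatic when $s=0$ or $t=0$, while for $s,t\neq 0$ it amounts to requiring that for each $w\in{\mathbb F}_q^{*}$ there be exactly one index $j$ with $c_j=-s/t=w$. As $-s/t$ ranges over all of ${\mathbb F}_q^{*}$, this says precisely that $(c_3,\dots,c_n)$ is a bijection onto the $q-1$ non-zero scalars, and since there are $n-2=q-1$ entries, the simplex lines through $P$ correspond bijectively to the permutations of ${\mathbb F}_q^{*}$, giving exactly $(q-1)!$ of them.

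What makes this count clean is that the two normalizations (fixing $P=(0,1,\dots,1)$ and fixing the first coordinate of $Q$ to be $1$) pin down both generators uniquely, so no quotient by residual generator-matrix freedom is needed. The one place I expect to require care is verifying that the simplex condition forces a genuine bijection rather than merely an injection: the codewords with no zero coordinate at all (those $s,t\neq 0$ for which $-s/t$ is not among the $c_j$) must also be excluded, and this is exactly what upgrades injectivity into surjectivity onto ${\mathbb F}_q^{*}$. As a consistency check, the answer is compatible with the global count $n!(q-1)^n/|{\rm GL}(2,q)|$ of all simplex lines through the double-counting identity $s\cdot(q-1)!=(q+1)\cdot(\text{number of simplex lines})$, where $s$ denotes the number of simplex points.

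For (2), I would observe that the neighbours of a simplex line $L$ in $\Gamma$ are exactly the simplex lines $L'\neq L$ meeting $L$ in a point, since adjacency in $\Gamma_2(V)$ means one-dimensional intersection and two distinct lines in projective space meet in at most a point. By (1) each of the $q+1$ points of $L$ lies on $(q-1)!$ simplex lines, one of which is $L$ itself, hence on $(q-1)!-1$ simplex lines other than $L$. Finally, any $L'\neq L$ shares at most one point with $L$ (two common points would force $L'=L$), so the lines counted through distinct points of $L$ are pairwise distinct; summing over the $q+1$ points of $L$ yields the degree $(q+1)[(q-1)!-1]$. No serious obstacle arises at this stage once (1) is established.
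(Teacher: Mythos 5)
Your proof is correct and follows essentially the same route as the paper: normalize to $P=\langle 0,1,\dots,1\rangle$ by transitivity of the monomial linear group and count using the requirement that the coordinates of a second generator be mutually distinct, then deduce (2) by summing $(q-1)!-1$ over the $q+1$ points of $L$. The only cosmetic difference is that the paper counts all $q!$ simplex points adjacent to $P$ and divides by the $q$ points distinct from $P$ on each line, whereas you count the lines directly via the normalized representative in $C_2$ and verify the simplex condition codeword by codeword instead of through the non-proportional-columns criterion.
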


\begin{proof} (1).
Let $P=\langle 0,1,\dots,1\rangle$.
A simplex point $\langle 1,a_{1},\dots, a_{q}\rangle$ is adjacent to $P$ 
if and only if all columns of the matrix
$$\left[\begin{array}{c@{\;}c@{\;}c@{\;}c}
0&1&\dots&1\\
1&a_{1}&\dots& a_{q}\\
\end{array}
\right]$$
are mutually non-proportional; 
the latter is equivalent to the fact that $a_{1},\dots,a_{q}$ are mutually distinct. 
Therefore, there are precisely $q!$ simplex points adjacent to $P$.
Every simplex line passing through $P$ contains $q$ points distinct from $P$
which means that $P$ is contained in precisely  $(q-1)!$ simplex lines.
Since the group of monomial linear automorphisms of $V$ acts transitively on 
the sets of simplex points and the set of simplex lines,
the above holds for every simplex point. 

(2). Each simplex line $L$ consists of $q+1$ simplex points.
Every such point is contained in precisely $(q-1)! -1$ simplex lines distinct from $L$.
\end{proof}

\begin{exmp}\label{exmp-grid}{\rm
If $q=3$, then $n=4$.
In this case, the set of all simplex vectors $x=(x_{1},x_{2},x_{3},x_{4})$
is described by the quadratic equation 
$$x^{2}_{1}+x^{2}_{2}+x^{2}_{3}+x^{2}_{4}=0$$
and all simplex points together with all simplex lines form a generalized quadrangle.
There are precisely $16$ simplex points and each of them is contained in precisely two simplex lines.
There are precisely $8$ simplex lines which  form  a grid and $\Gamma$ is isomorphic to $K_{4,4}$.
}\end{exmp}

From this moment, we suppose that $q\ge 4$.

\begin{prop}\label{prop-ad}
Let $P=\langle x_{1},\dots,x_{q+1}\rangle$ be a simplex point and $x_{i}=0$.
If a simplex point $Q=\langle y_{1},\dots, y_{q+1}\rangle $ is adjacent to $P$, 
then
\begin{equation}\label{eq-ad}
a_{1}y_{1}+\dots+ a_{q+1}y_{q+1}=0,\;\mbox{ where }\; a_{j}=x^{-1}_j\;\mbox{ for }\;j\ne i\;\mbox{ and }\;a_i=0\,.
\end{equation}
In the case when $q=4$, a simplex point $Q=\langle y_{1},\dots, y_{q+1}\rangle $ is adjacent to $P$
if and only if it satisfies \eqref{eq-ad} and  $y_{i}\ne 0$.
\end{prop}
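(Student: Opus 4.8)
The plan is to encode adjacency in terms of the columns of the matrix whose rows are representatives $x=(x_1,\dots,x_{q+1})$ and $y=(y_1,\dots,y_{q+1})$ of $P$ and $Q$. By the third characterization of simplex codes, the line $\langle P,Q\rangle$ is simplex precisely when the $q+1$ columns $(x_j,y_j)$ are mutually non-proportional; in that case $\langle P,Q\rangle$ has projective system ${\rm PG}(1,q)$, which has exactly $q+1$ points, so the columns run over \emph{all} points of ${\rm PG}(1,q)$, each once. Because $x_i=0$, the $i$-th column is $(0,y_i)$, hence a genuine point only if $y_i\ne 0$; this already accounts for the condition $y_i\ne 0$ in the $q=4$ part. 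For $j\ne i$ we have $x_j\ne 0$, so the $j$-th column is proportional to $(1,t_j)$ with $t_j=y_j x_j^{-1}$.

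For the necessity of \eqref{eq-ad} (which in fact holds for every $q\ge 3$), I would argue that if $P$ and $Q$ are adjacent, then the columns are mutually non-proportional, so the $q$ affine points $(1,t_j)$ with $j\ne i$ are mutually distinct. Thus $\{t_j:j\ne i\}$ exhausts all of ${\mathbb F}_q$, and summing while using that the sum of all elements of ${\mathbb F}_q$ is zero for $q\ge 3$ yields $\sum_{j\ne i}y_j x_j^{-1}=0$, which is exactly \eqref{eq-ad} with $a_j=x_j^{-1}$ and $a_i=0$.

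For the converse when $q=4$, suppose $Q$ is a simplex point satisfying \eqref{eq-ad} with $y_i\ne 0$, and let $m\ne i$ be the index of its unique zero coordinate. I would check directly that the columns indexed by $i$ and $m$ are the distinct points $(0,1)$ and $(1,0)$, and that each is distinct from the remaining columns $(1,t_j)$, $j\ne i,m$, where now $t_j\ne 0$. It then remains to verify that these $q-1$ slopes are mutually distinct. Equation \eqref{eq-ad} gives $\sum_{j\ne i,m}t_j=0$ (the term $t_m$ vanishes), so the $q-1=3$ nonzero elements $t_j$ sum to zero; this is the crucial point, and it is exactly where the special property of ${\mathbb F}_4$ enters, since for $q=4$ three nonzero field elements sum to zero if and only if they are mutually distinct. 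Consequently the $t_j$ are distinct, all columns are mutually non-proportional, $\langle P,Q\rangle$ is simplex, and $P\ne Q$ (as $y_i\ne 0=x_i$), so $Q$ is adjacent to $P$.

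The main obstacle is precisely this last implication: the zero-sum condition \eqref{eq-ad} forces distinctness of the slopes only because $q=4$. For $q>4$ one can exhibit repeated nonzero slopes with vanishing sum, so \eqref{eq-ad} together with $y_i\ne 0$ no longer guarantees a simplex line; this is the reason the characterization does not extend, as recorded in Remark \ref{rem-ad}.
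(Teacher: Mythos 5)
Your proposal is correct and follows essentially the same route as the paper: adjacency is translated into the mutual non-proportionality of the columns $(x_j,y_j)$, equivalently the distinctness of the slopes $x_j^{-1}y_j$ for $j\ne i$, necessity of \eqref{eq-ad} then follows from the vanishing of the sum of all elements of ${\mathbb F}_q$, and the converse for $q=4$ rests on the fact that three non-zero elements of ${\mathbb F}_4$ sum to zero if and only if they are mutually distinct. Your write-up is somewhat more explicit than the paper's in the converse direction (isolating the zero coordinate $y_m$ and checking the columns $(0,1)$ and $(1,0)$ separately), but the underlying argument is identical.
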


\begin{proof}
The simplex points $P,Q$ are adjacent  if and only if $y_{i}\ne 0$
and the columns of the matrix
$$\left[\begin{array}{c@{\;}c@{\;}c}
x_{1}&\dots&x_{q+1}\\
y_{1}&\dots&y_{q+1}\\
\end{array}
\right]$$
are mutually non-proportional. 
The latter is equivalent to the fact that $y_i\ne 0$ and
$$x^{-1}_{1}y_{1},\dots,x^{-1}_{i-1}y_{i-1},\,x^{-1}_{i+1}y_{i+1},\dots ,x^{-1}_{q+1}y_{q+1}$$ 
are mutually distinct elements of ${\mathbb F}_{q}$
(one of them is zero).
The sum of all $q-1$ non-zero elements of ${\mathbb F_{q}}$ is zero 
and we obtain \eqref{eq-ad}. 

In the case when $q=4$, the sum of three non-zero elements of ${\mathbb F}_q$ is zero 
if and only if these elements are mutually distinct. This implies the second statement.
\end{proof}

\begin{rem}\label{rem-ad}{\rm
If $q=4$ and $T$ is a simplex point contained in  the hyperplane defined by \eqref{eq-ad}
and non-adjacent to $P$ (it will be shown latter that such points exist),
then Proposition \ref{prop-ad} implies that $P$ and $T$ are contained in the same coordinate hyperplane.
In the case when $q\ge 5$, 
there exist $b_{1},\dots,b_{q-1}\in{\mathbb F}_{q}\setminus\{0\}$ 
such that $b_{1}+\dots+b_{q-1}=0$ and $b_{i}=b_{j}$ for some distinct $i,j$; 
therefore, the hyperplane defined by \eqref{eq-ad} contains 
simplex points $T$ non-adjacent to $P$ and such that $P,T$ belong to distinct coordinate hyperplanes.
}\end{rem}

The non-empty intersection of a maximal clique of $\Gamma_2(V)$ (a star or a top) with the set of simplex lines 
is a clique in $\Gamma$, but we cannot state that this is a maximal clique of $\Gamma$.

\begin{prop}\label{prop-cliq}
If $q=4$, then every maximal clique of $\Gamma$ consists of all simplex lines passing through a certain simplex point. 
\end{prop}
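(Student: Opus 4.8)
The plan is to combine the classification of maximal cliques of the Grassmann graph with a single geometric statement about coplanar simplex lines. Any clique of $\Gamma$ is also a clique of $\Gamma_2(V)$, hence lies in a star $\mathcal{S}(P)$ over a point $P$ or a top $\mathcal{T}(Y)$ over a plane $Y$. If a maximal clique $M$ lies in a star $\mathcal{S}(P)$, then all of its lines pass through $P$; being simplex lines they force $P$ to be a simplex point, and the family of all simplex lines through $P$ is itself a clique of $\Gamma$ containing $M$, so maximality yields equality. This is precisely the asserted form (and by Proposition~\ref{prop-element} such a family has $(q-1)!=6$ members). It therefore remains to treat the top case.

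The reduction I would use is the claim that \emph{all simplex lines contained in a common plane $Y$ are concurrent}. Granting it, the lines of a clique lying in a top $\mathcal{T}(Y)$ are coplanar simplex lines, hence pass through a common simplex point $P$, so the clique also lies in $\mathcal{S}(P)$ and we are reduced to the previous case. One cannot avoid this statement by a cheaper argument: a line not contained in $Y$ meets $Y$ in at most one point and so cannot be adjacent to three non-concurrent lines of $Y$; thus a non-concurrent coplanar family could never be enlarged from outside, and only the concurrency claim closes the gap.

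To prove concurrency I would dualize inside $Y^{*}\cong{\rm PG}(2,4)$. For any plane $Y$ containing a simplex line we have $Y\not\subseteq C_i$ for all $i$, so the restrictions $c_i|_Y$ are non-zero; since $\bigcap_i(C_i\cap Y)=0$, these functionals span $Y^{*}$ and hence determine five points $\bar c_1,\dots,\bar c_5$ spanning ${\rm PG}(2,4)$. A point of $Y$ is simplex exactly when the dual line through it carries a single $\bar c_i$, and a line $L\subseteq Y$ is simplex exactly when the dual point $L^{*}$ avoids all $\binom{5}{2}$ secants $\overline{\bar c_i\bar c_j}$; call such points \emph{free}. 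Concurrency of simplex lines is collinearity of the corresponding free points, so the claim becomes: \emph{the free points of a spanning five-point set in ${\rm PG}(2,4)$ are collinear.}

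The main obstacle is this last, genuinely $\mathbb{F}_4$-specific, assertion. I expect to finish it by a short case analysis of spanning five-point sets according to their collinear triples --- an arc (no triple), one triple, two triples, or four collinear points --- verifying that the free set then has $1$, $0$, $2$, $3$ points respectively and is collinear in every case; as ${\rm P\Gamma L}(3,4)$ acts transitively within each type, one representative per type suffices. What forces the free set to stay collinear (indeed of size at most $3$) is exactly the arithmetic of $\mathbb{F}_4$ recorded before Proposition~\ref{prop-ad}: three non-zero elements sum to zero precisely when they are distinct. By Remark~\ref{rem-ad} this fails for $q\ge5$, which is why the method does not carry over.
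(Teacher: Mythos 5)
Your proposal is correct, and it shares with the paper the same pivotal reduction: after the star/top dichotomy, everything comes down to showing that three mutually adjacent simplex lines cannot meet in three distinct points, which is exactly your concurrency claim for coplanar simplex lines. Where you diverge is in how that claim is proved. The paper normalizes the configuration (taking one of the three lines to be the line of Example~\ref{exmp1} and fixing the two intersection points $\langle x\rangle,\langle y\rangle$ on it) and applies Proposition~\ref{prop-ad} twice to force the third vertex $z$ onto one of the remaining three points of that same line, a contradiction; this is a short coordinate computation. You instead dualize inside the plane $Y$, turning simplex lines of $Y$ into the points of ${\rm PG}(2,4)$ lying on no secant of the spanning five-point set $\{\bar c_1,\dots,\bar c_5\}$, and reduce to the statement that these free points are always collinear. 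Your setup is sound (the $\bar c_i$ are indeed five \emph{distinct} spanning points whenever $Y$ carries a simplex line --- worth a sentence, since otherwise $Y$ has no simplex lines at all), your four configuration types are exhaustive, ${\rm P\Gamma L}(3,4)$ is transitive within each type, and the free-point counts $1,0,2,3$ you predict are correct (in the arc case the unique free point is the nucleus of the hyperoval completion). The only thing left undone is the representative-by-representative verification, but that is a finite check with no hidden difficulty. What your route buys is extra structural information --- it classifies how many simplex lines each plane of $V$ can carry and exhibits the role of hyperovals --- at the cost of more machinery; the paper's computation is shorter but yields only the bare non-existence statement. Both arguments ultimately rest on the same ${\mathbb F}_4$ arithmetic you cite.
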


\begin{proof}
It is sufficient to show that for $q=4$ there exist no three mutually adjacent simplex lines intersecting in three distinct points.
Suppose that $S_1, S_2, S_3$ are such lines and
$$S_1\cap S_2=\langle x\rangle, \; S_1\cap S_3=\langle y\rangle, \; S_2\cap S_3=\langle z\rangle$$
are mutually distinct and non-collinear.
Without lose of generality we can assume that $S_1$ is the simplex line considered in Example \ref{exmp1} and 
$$x=(0,1,1,1,1),\;\;y=(1,0,1,\alpha,\alpha^2).$$
Indeed, the group of monomial linear automorphisms of $V$ acts transitively on the set of simplex lines
and for any two distinct points on a simplex line there is an automorphism of this line sending this pair to any other pair of points on the line.
Applying Proposition \ref{prop-ad} to the pairs $\langle x \rangle, \langle z \rangle$ and $\langle y \rangle, \langle z \rangle$,
we establish that $z$ is a scalar multiple of one of the following vectors
$$(1,1,0,\alpha^2,\alpha),\; (1,\alpha,\alpha^2,0,1),\; (1,\alpha^2,\alpha,1,0);$$
but the corresponding simplex points belong to the line $S_1$ and we get a contradiction.
\end{proof}

\begin{exmp}{\rm 
Proposition \ref{prop-cliq} fails for $q\ge 5$.
If $q=5$, then the simplex vectors
$$x=(0,1,1,1,1,1),\; y=(1,0,1,2,4,3),\; z=(4,3,1,4,2,0)$$
define non-collinear simplex points and the lines
$$\langle x\rangle + \langle y\rangle, \;\langle x\rangle + \langle z\rangle, \; \langle y \rangle + \langle z\rangle$$
are simplex.
A similar example can be constructed for every $q\ge 5$.
}\end{exmp}

The following statement completely describe the graph $\Gamma$ in the case  when $q=4$. 

\begin{theorem}\label{theorem1}
Let $q=4$. Then $\Gamma$ is a connected graph of diameter $3$ consisting of $162$ simplex lines
and the degree of every vertex of $\Gamma$ is equal to $25$.
For every simplex line $L$ the following assertions are fulfilled:
\begin{enumerate}
\item[(1)] There are precisely $6$ simplex lines $L_{1},\dots,L_{6}$ which are at distance $3$ from $L$ in the graph $\Gamma$.
\item[(2)] There are precisely $130$ simplex lines which are at distance $2$ from $L$ in the graph $\Gamma$.
The set of all such lines is the union of three mutually disjoint subsets denoted by ${\mathcal X}^{3}_{20},{\mathcal X}^{1}_{90}, {\mathcal X}^{0}_{20}$,
where
\begin{enumerate}
\item[$\bullet$]${\mathcal X}^{3}_{20}$ is formed by $20$ simplex lines and each of these lines is adjacent to precisely three distinct $L_{i}$, 
\item[$\bullet$]${\mathcal X}^{1}_{90}$ consists of  $90$ lines and every such  line is adjacent to a unique $L_{i}$,
\item[$\bullet$]${\mathcal X}^{0}_{20}$ consists of $20$ simplex lines disjoint with all $L_i$.
\end{enumerate}
\item[(3)] $\{L,L_{1},\dots, L_{6}\}\cup {\mathcal X}^{0}_{20}$ is a spread of the set of all $135$ simplex points,
i.e. this set consists of $27$ mutually disjoint lines which cover the set of simplex points.  
\end{enumerate}
\end{theorem}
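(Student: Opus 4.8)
The plan is to exploit vertex-transitivity and work entirely relative to one explicit line. Since the group $M$ of monomial semilinear automorphisms of $V$ acts transitively on simplex lines, $\Gamma$ is vertex-transitive, so the distance distribution from every vertex is the same and it suffices to analyze the classes emanating from a single vertex; I take $L$ to be the line of Example \ref{exmp1}, with its five points $p^{(1)},\dots,p^{(5)}$ labelled by the position of the zero coordinate. The stabilizer $G_L=\mathrm{Stab}_M(L)$ is isomorphic to $\Gamma\mathrm{L}(2,4)$, of order $360$, and acts on $\{p^{(1)},\dots,p^{(5)}\}$ through $\mathrm{P\Gamma L}(2,4)\cong\mathrm{S}_5$ with kernel the scalars; thus \emph{every} permutation of the five points is induced by a symmetry fixing $L$. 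Every distance class from $L$, and each of the sets $\mathcal X^{3}_{20},\mathcal X^{1}_{90},\mathcal X^{0}_{20}$ and $\{L_1,\dots,L_6\}$, is $G_L$-invariant, and I will use this $\mathrm{S}_5$-symmetry to replace enumeration over all lines by the inspection of a few orbit representatives.

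First I would record the local data. There are $5\cdot(3^4/3)=135$ simplex points, and by Proposition \ref{prop-element} each lies on $6$ simplex lines, so by double counting there are $135\cdot 6/5=162$ simplex lines, with every vertex of degree $(q+1)[(q-1)!-1]=25$. For $k=2$ two distinct lines are adjacent in $\Gamma_2(V)$ iff they are coplanar, i.e.\ share a point; hence a line non-adjacent to $L$ is exactly one \emph{skew} to $L$, and there are $162-1-25=136$ such lines, each at distance $2$ or $3$. Here Proposition \ref{prop-ad} is the key tool: the simplex points adjacent to $p^{(i)}$ are precisely the simplex points on an explicit hyperplane $H_i$ satisfying $y_i\ne 0$, so ``sharing a common neighbour with $L$'' becomes a linear-incidence condition that can be tested on representatives.

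Next I would separate distances $2$ and $3$. A skew line is at distance $2$ iff it meets some neighbour of $L$, and at distance $3$ otherwise. Using the hyperplanes $H_i$ together with the $\mathrm{S}_5$-action I would show that exactly $6$ skew lines have no common neighbour with $L$, that these form a single $G_L$-orbit carrying a transitive action of $\mathrm{S}_5$ on six points, and that they are pairwise non-adjacent (a finite check on the six representatives); exhibiting for each an explicit path of length $3$ to $L$ then gives connectivity and diameter $3$. For the $130$ lines at distance $2$, I would compute the $G_L$-orbit decomposition, obtaining orbits of sizes $20$, $90$, $20$, and read off from representatives that a distance-$2$ line is adjacent to exactly $3$, exactly $1$, or $0$ of the $L_i$, respectively; as a consistency check, since each $L_i$ is at distance $3$ all $25$ of its neighbours are distance-$2$ lines, so the incidence count gives $3\cdot 20+1\cdot 90+0\cdot 20=150=6\cdot 25$.

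Finally, for the spread in (3) I would prove that the $27$ lines $L,L_1,\dots,L_6$ together with the $20$ lines of $\mathcal X^{0}_{20}$ are pairwise disjoint. Disjointness of $L$ from each $L_i$ and from each $\mathcal X^{0}_{20}$-line is immediate from skewness, the $L_i$ are pairwise disjoint by the non-adjacency established above, and the $\mathcal X^{0}_{20}$-lines are disjoint from every $L_i$ by definition; the one genuinely new point is that the $\mathcal X^{0}_{20}$-lines are pairwise disjoint, which I would verify on $G_L$-representatives. Pairwise disjointness of $27$ lines, each with $5$ simplex points, then accounts for $27\cdot 5=135$ distinct simplex points, i.e.\ all of them, so the family is a spread. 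I expect the main obstacle to be precisely this last layer of bookkeeping---showing that the adjacency multiplicities to $\{L_1,\dots,L_6\}$ are confined to $\{0,1,3\}$ and that $\mathcal X^{0}_{20}$ is internally disjoint---because here the convenient $\mathrm{S}_5$-symmetry must be combined with, rather than replaced by, the arithmetic of $\mathbb F_4$ underlying Proposition \ref{prop-ad} (the fact that three nonzero elements sum to zero iff they are distinct).
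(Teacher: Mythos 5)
Your plan follows essentially the same route as the paper: fix the example line, exploit the $S_5$-symmetry of its stabilizer together with the hyperplanes $H_i$ of Proposition \ref{prop-ad} to isolate the six distance-$3$ lines, classify the distance-$2$ lines by their adjacency multiplicity to $L_1,\dots,L_6$, and verify the spread by the disjointness count $27\cdot 5=135$. One correction and one caution: the $90$ lines meeting exactly one $L_i$ form \emph{two} ${\rm G}(L)$-orbits (of sizes $30$ and $60$, cf.\ Theorem \ref{theorem2}), not one, so reading the multiplicity off a single representative would be incomplete --- the paper instead proves that the multiplicity is confined to $\{0,1,3\}$ (Lemma \ref{lemma-nL}, the genuinely hard $\mathbb{F}_4$ computation you rightly flag as the main obstacle) and then obtains $90=6\cdot(25-10)$ by counting neighbours of the $L_i$.
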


As above, we suppose that $q=4$.
In this case, there is the unique non-identify field automorphism which sends every $a$ to $a^2$.
Semilinear automorphisms of $V$ induces bijective transformations of the associated projective geometry.
Such transformations are called {\it projective}. 
Two semilinear automorphisms induce the same projective transformation if and only if one of them is a scalar multiple of the other.
For every simplex line $L$ we denote by ${\rm G}(L)$ the group of all projective transformations induced by 
monomial semilinear automorphisms of $V$ preserving $L$, i.e. the extensions of automorphisms of the corresponding simplex code
(recall that every such extension is unique).
The group ${\rm G}(L)$ is isomorphic to ${\rm P}\Gamma {\rm L}(2,4)$ 
(since the automorphism group of the corresponding code is isomorphic to $\Gamma {\rm L}(2,4)$)
and the subgroup of ${\rm G}(L)$ formed by transformations induced by linear automorphisms is isomorphic to ${\rm PGL}(2,4)$.
On the other hand, ${\rm P}\Gamma {\rm L}(2,4)$ is a subgroup of the permutation group $S_{5}$ acting on the points of $L$.
These groups both are of order $120$ which means that they are coincident. 
Also, ${\rm PGL}(2,4)$ is of order $60$ and the alternating group $A_5$ is the unique subgroup of $S_5$ whose order is $60$.
Therefore, every permutation of the points of $L$ can be uniquely extended to an element of ${\rm G}(L)$;
moreover, an element of ${\rm G}(L)$ is induced by a linear automorphism if and only if it gives an even permutation of the points of $L$.

\begin{theorem}\label{theorem2}
Suppose that $q=4$ and $L$ is a simplex line. Let $L_{1},\dots, L_{6}$ and ${\mathcal X}^{3}_{20},{\mathcal X}^{1}_{90}, {\mathcal X}^{0}_{20}$
be as in Theorem \ref{theorem1} and let ${\mathcal A}$ be the set of all simplex lines adjacent to $L$.
The action of the group ${\rm G}(L)$ on the set of all simplex lines has the following properties:
\begin{enumerate}
\item[{\rm (1)}]
The sets $\{L_{1},\dots, L_{6}\}$, ${\mathcal X}^{3}_{20}$ and ${\mathcal X}^{0}_{20}$ are orbits of this action;
moreover, the action of ${\rm G}(L)$ on the set $\{L_{1},\dots, L_{6}\}$ is sharply $3$-transitive. 
\item[{\rm (2)}] The set ${\mathcal A}$ is the union of two orbits consisting of $10$ and $15$ simplex lines. 
\item[(3)] The set ${\mathcal X}^{1}_{90}$ is the union of two orbits consisting of $30$ and $60$ simplex lines. 
\end{enumerate}
\end{theorem}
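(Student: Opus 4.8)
The plan is to leverage the identification, established just before the statement, of $\mathrm{G}(L)$ with the full symmetric group $S_5$ acting on the five points of $L$, together with the orbit structure already implicit in Theorem \ref{theorem1}. Since every permutation of the points of $L$ extends uniquely to an element of $\mathrm{G}(L)$, and since all the combinatorially defined sets $\{L_1,\dots,L_6\}$, $\mathcal{X}^3_{20}$, $\mathcal{X}^1_{90}$, $\mathcal{X}^0_{20}$, and $\mathcal{A}$ are defined purely in terms of the incidence and adjacency data that $\mathrm{G}(L)$ preserves, each of these sets is automatically $\mathrm{G}(L)$-invariant. The task is therefore to decompose each invariant set into orbits and count the orbits by their sizes, using $|\mathrm{G}(L)|=120$ and the orbit--stabilizer theorem throughout.

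First I would treat part (1). The set $\{L_1,\dots,L_6\}$ of the six lines at distance $3$ has exactly six elements, and since $\mathrm{G}(L)\cong S_5$ has order $120=6\cdot 20$, a single orbit here would require a point-stabilizer of order $20$. The natural move is to produce, for a fixed $L_i$, an explicit subgroup of order $20$ fixing it (equivalently, to exhibit the standard action of $S_5$ on six objects), or better, to show directly that the action on $\{L_1,\dots,L_6\}$ is sharply $3$-transitive: a sharply $3$-transitive action of $S_5$ on six points has image of order $6\cdot 5\cdot 4=120$, matching $|\mathrm{G}(L)|$ exactly, which forces both transitivity (hence a single orbit) and faithfulness. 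I would realize this via the classical isomorphism $\mathrm{P\Gamma L}(2,4)\cong S_5$ and the action of $\mathrm{PGL}(2,5)\cong S_5$ on the six points of the projective line $\mathrm{PG}(1,5)$, matching the $L_i$ to these six points. For $\mathcal{X}^3_{20}$ and $\mathcal{X}^0_{20}$, each having $20$ elements, a single orbit requires a stabilizer of order $6$; I would verify transitivity either by exhibiting the stabilizer explicitly or by ruling out any proper invariant subdivision using the sharp $3$-transitivity on the $L_i$ established above (for instance, $\mathcal{X}^3_{20}$ lines are parametrized by the triples of $L_i$ they meet, and $S_5$ permutes these triples transitively).

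Next I would handle parts (2) and (3), where the relevant sets genuinely split into two orbits. For $\mathcal{A}$ with $|\mathcal{A}|=25=10+15$, the orbit sizes $10$ and $15$ correspond to stabilizers of orders $12$ and $8$ respectively; I would identify the distinguishing invariant feature separating the two orbits — a natural candidate being how a line in $\mathcal{A}$ sits relative to $L$ and to the spread structure (for example, whether the common point with $L$ lies on a line of the spread in a prescribed way, or the combinatorial type recorded by Proposition \ref{prop-cliq}). Concretely, an adjacent line meets $L$ in one of its five points; $S_5$ permutes these meeting points, and the $25$ adjacent lines distribute over the five points as $5$ lines each, so the orbit split must be refined by a further invariant at each fixed point, where the point-stabilizer $S_4$ (order $24$) acts on the $5$ lines through that point and splits them into sizes $2$ and $3$. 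This $2{+}3$ split under $S_4$ is exactly what yields the global $10{+}15$ decomposition, and I would verify it by examining the $S_4$-action on those five lines. The same strategy applies to $\mathcal{X}^1_{90}=30+60$: the natural invariant is the unique $L_i$ to which such a line is adjacent, giving a map to $\{L_1,\dots,L_6\}$ whose fibers each have $15$ lines, and the fiber $S_5$-stabilizer (of order $20$) splits these $15$ lines into orbits of sizes $5$ and $10$, producing $30=6\cdot 5$ and $60=6\cdot 10$.

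The main obstacle I expect is not the group theory — the orbit sizes are forced almost entirely by $|\mathrm{G}(L)|=120$ and the arithmetic of stabilizer orders — but rather isolating the correct \emph{geometric invariant} that distinguishes the two orbits within $\mathcal{A}$ and within $\mathcal{X}^1_{90}$, and proving that this invariant is genuinely $\mathrm{G}(L)$-invariant yet takes two distinct values with the predicted multiplicities. I would pin this down by computing with the explicit coordinates of Example \ref{exmp1}: fix $L$ as the line there, list representative adjacent and distance-$2$ lines using the adjacency criterion of Proposition \ref{prop-ad} (for $q=4$), and directly compute the orbit of a chosen representative under a generating set of $\mathrm{G}(L)$ (realized as permutations of the five points of $L$). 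The resulting orbit sizes, checked against $10+15$ and $30+60$, then complete the proof; the sharp $3$-transitivity from part (1) furnishes the rigidity needed to confirm that no finer splitting occurs.
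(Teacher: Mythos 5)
Your overall architecture coincides with the paper's: invariance of each set is automatic, the orbit sizes are extracted by orbit--stabilizer with $|{\rm G}(L)|=120$, ${\mathcal X}^{3}_{20}$ is parametrized by the $20$ triples of the $L_i$ (the paper's Lemma \ref{lemma-Sij} guarantees each triple meets at most one transversal), ${\mathcal A}$ is fibered over the five points of $L$ with a local $2+3$ split under the point stabilizer, and ${\mathcal X}^{1}_{90}$ is fibered over the $L_i$ with a local $10+5$ split — exactly Propositions \ref{prop1-2}, \ref{prop1-6} and \ref{prop1-7}. The distinguishing invariant you are looking for in parts (2) and (3) is simply ``meets some $L_{ijk}\in{\mathcal X}^{3}_{20}$ or not,'' and your deferral to explicit computation with the coordinates of Example \ref{exmp1} is also what the paper does. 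Your shortcut for sharp $3$-transitivity (every transitive action of $S_5$ on six objects is the ${\rm PGL}(2,5)$ action on ${\rm PG}(1,5)$, hence sharply $3$-transitive) is legitimate and slicker than the paper's Proposition \ref{prop1-4}, \emph{provided} transitivity on $\{L_1,\dots,L_6\}$ is first established; note that exhibiting a subgroup of order $20$ fixing $L_1$ only bounds the orbit size from \emph{above}, so you still need explicit automorphisms moving $L_1$ to each $L_i$ (the paper's Lemma \ref{lemma-q6} does this by permuting the points $Q_1,\dots,Q_6$).

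The one genuine gap is ${\mathcal X}^{0}_{20}$. Neither of your proposed methods engages the actual difficulty: exhibiting a stabilizer of order $6$ again only gives an upper bound of $20$ on the orbit, and ``ruling out a proper invariant subdivision using the sharp $3$-transitivity on the $L_i$'' has no traction here because the lines of ${\mathcal X}^{0}_{20}$ are by definition disjoint from $L$ and from every $L_i$, so the action on $\{L_1,\dots,L_6\}$ does not parametrize them the way it parametrizes ${\mathcal X}^{3}_{20}$. The paper needs a new construction at this point: each complementary pair of triples $\{i,j,k\}$, $\{i',j',k'\}$ determines the pair of transversals $L_{ijk}$, $L_{i'j'k'}$, each of which has exactly two points lying on no $L_m$, and the two simplex lines joining these ``extra'' points crosswise are the elements of ${\mathcal X}^{0}_{20}$ attached to that pair ($10$ pairs $\times$ $2$ lines $=20$). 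Transitivity then follows from transitivity of ${\rm G}(L)$ on the $10$ complementary pairs (via the order-$12$ group ${\rm G}(\{L_{136},L_{245}\})$, which requires the semilinear element $w$) together with an explicit element swapping the two lines attached to a fixed pair. Without some such parametrization tying ${\mathcal X}^{0}_{20}$ back to structure on which ${\rm G}(L)$ demonstrably acts transitively, your plan does not close this case; a brute-force orbit computation from a generating set would of course settle it, but that is then the entire proof, not a consequence of the ideas you state.
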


\begin{rem}{\rm
Two linear codes with the same parameters are adjacent if and only if the dual codes are adjacent.
Simplex codes are dual to Hamming codes. Recall that under our assumption $k=2$ and $n=q+1$.
The graph $\Gamma$ is isomorphic to the restriction of the Grassmann graph $\Gamma_{q-1}(V)$ to the set of $q$-ary Hamming codes of dimension $q-1$.
Therefore, Theorems \ref{theorem1} and \ref{theorem2} can be reformulated in terms of $4$-ary Hamming codes of dimension $3$.
}\end{rem}

\section{Proof of Theorems \ref{theorem1} and \ref{theorem2}}
Suppose that $k=2$ and $q=4$. Then $n=5$.

\subsection{Six simplex lines}
Let $L$ be a simplex line. 
Denote by $P_{i}$ the simplex point which is the intersection of $L$ with the coordinate hyperplane $C_i$.
The line $L$ consists of the points $P_{1},\dots,P_5$.
Let $H_{i}$ be the hyperplane containing all simplex points adjacent to $P_i$ and described by the equality from Proposition \ref{prop-ad}.
A simplex point $P\ne P_i$ from $H_i$ is not adjacent to $P_i$ if and only if it is contained in $H_i\cap C_i$. 
\begin{lemma}\label{lemma-d3}
For a simplex line $L'$ the following two conditions are equivalent:
\begin{enumerate}
\item[(1)] $L'$ intersects every $H_i$ in a point contained in  $H_i\cap C_i$ and distinct from $P_i$,
\item[(2)]  the distance between $L$ and $L'$ in the graph $\Gamma$ is greater than $2$ or $L$ and $L'$ cannot be connected in $\Gamma$.
\end{enumerate}
\end{lemma}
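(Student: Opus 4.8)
The plan is to translate both conditions into statements about which points of $L'$ are adjacent to the points $P_1,\dots,P_5$ of $L$, and then to reconcile the two translations using the $q=4$ half of Proposition \ref{prop-ad}. Two facts will be used throughout. First, by Proposition \ref{prop-ad} (in the case $q=4$) a simplex point $Q$ is adjacent to $P_i$ exactly when $Q\in H_i$ and $Q\notin C_i$; thus $H_i\setminus C_i$ is precisely the set of simplex points adjacent to $P_i$. Second, since every simplex point has exactly one zero coordinate, no simplex line lies inside a coordinate hyperplane, so $L'$ meets each $C_i$ in a single point and has four points off $C_i$. I will also use that two simplex lines are adjacent in $\Gamma$ iff they meet in a point, and that when one of them is $L$ this shared point is one of $P_1,\dots,P_5$.

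First I would reformulate the negation of (1). Fixing an index $i$, a line of the projective space meets the hyperplane $H_i$ either in one point or lies inside it, so (1) fails at $i$ precisely when one of the following holds: $L'\subseteq H_i$, or $L'\cap H_i=\{P_i\}$, or $L'\cap H_i=\{R\}$ with $R\notin C_i$. Unpacking these three possibilities with the second fact above, I would show that they amount to the single statement: either $P_i\in L'$, or $L'$ contains a simplex point lying in $H_i\setminus C_i$. By the first fact this reads: \emph{for some $i$, either $P_i\in L'$ or $L'$ carries a point adjacent to $P_i$}.

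Next I would match this condition with distance. For the implication ``distance $\le 2$ $\Rightarrow$ not (1)'': if $L'=L$ then $P_i\in L'$ for every $i$; if $L'$ is adjacent to $L$ then $P_i\in L'$ for the shared index $i$; and if $L'$ is at distance exactly $2$ through an intermediate simplex line $M$ meeting $L$ at $P_i$ and $L'$ at a point $Q'$, then $Q'\ne P_i$ lies with $P_i$ on the simplex line $M$, hence is adjacent to $P_i$, i.e.\ $Q'\in L'\cap(H_i\setminus C_i)$. For the converse ``not (1) $\Rightarrow$ distance $\le 2$'': if $P_i\in L'$ we are already within distance $1$; otherwise I would pick a point $R\in L'$ adjacent to $P_i$, form the unique simplex line $M=\langle P_i,R\rangle$, and observe that $M$ is adjacent to $L$ (through $P_i$) and to $L'$ (through $R$), unless a coincidence intervenes: $M=L$ would force $R\in L\cap L'$ and $M=L'$ would force $P_i\in L\cap L'$, in either case making $L$ and $L'$ already adjacent. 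Every branch yields distance $\le 2$.

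The only genuinely delicate point is the case $L'\subseteq H_i$, where $L'\cap H_i$ is not a single point: there I must still produce a point of $L'$ adjacent to $P_i$, and this is exactly where the second fact is needed, since it guarantees $L'$ is not contained in $C_i$ and therefore has four points inside $H_i\setminus C_i$. The hard part will not be any computation but rather keeping the degenerate incidences straight --- $L'=L$, $L'$ meeting $L$, $M$ coinciding with $L$ or $L'$, and $L'\subseteq H_i$ --- and checking that each of them collapses to distance $\le 2$; once the reformulation of (1) in the second paragraph is in place, the remaining arguments are direct applications of Proposition \ref{prop-ad}.
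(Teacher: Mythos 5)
Your proposal is correct and is essentially the paper's own argument written out in full: the paper's one-sentence proof ("condition (1) holds iff $L'$ is disjoint with all simplex lines adjacent to $L$") rests on exactly the identification, via the $q=4$ case of Proposition \ref{prop-ad}, of the simplex points adjacent to $P_i$ with the simplex points of $H_i\setminus C_i$, which is the content of your reformulation. Your careful handling of the degenerate cases ($L'\subseteq H_i$, $M=L$, $M=L'$) supplies details the paper leaves implicit, but the route is the same.
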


\begin{proof}
The condition (1) holds if and only if $L'$ is disjoint with all simplex lines adjacent to $L$.
\end{proof}

It will be shown latter that there are precisely $6$ simplex lines $L'$ satisfying the condition (1) from Lemma \ref{lemma-d3}
and these lines are adjacent to some of the remaining simplex lines. 
This implies that $\Gamma$ is a connected graph of diameter $3$.

\begin{exmp}\label{exmp-main}{\rm
Suppose that $L$ is the simplex line considered in Example 1.
We identify this line with the matrix 
$$\left[\begin{array}{c@{\;}c@{\;}c@{\;}c@{\;}c}
0&1&1&1&1\\
1&0&1&\alpha&\alpha^2\\
1&1&0&\alpha^2&\alpha\\
1&\alpha&\alpha^2&0&1\\
1&\alpha^2&\alpha&1&0\\
\end{array}
\right],$$
where the $i$-th row corresponds to a vector belonging to $P_i$.
By Proposition \ref{prop-ad}, the hyperplanes $H_1,\dots,H_5$ are defined by the following equations:
\begin{enumerate}
\item[(H1)] $x_2+x_{3}+x_{4}+x_{5}=0$,
\item[(H2)] $x_{1}+x_{3}+\alpha^2 x_{4} +\alpha x_5=0$,
\item[(H3)] $x_{1}+x_{2}+\alpha x_{4} +\alpha^2 x_5=0$,
\item[(H4)] $x_{1}+\alpha^2 x_2+ \alpha x_3 + x_5=0$,
\item[(H5)] $x_{1}+\alpha x_2+ \alpha^2 x_3 +x_{4}=0$,
\end{enumerate}
Also, $H_{1}\cap C_{1}$ contains precisely $6$ simplex points:
$$Q_1=\langle 0, 1,1,\alpha,\alpha\rangle,\; Q_2=\langle 0, 1,\alpha ,1, \alpha\rangle,\; Q_3=\langle 0, 1,\alpha,\alpha,1\rangle,$$
$$Q_4=\langle 0, 1,1,\alpha^2,\alpha^2\rangle,\;Q_5=\langle 0, 1,\alpha^2,1,\alpha^2\rangle,\;Q_6=\langle 0,1,\alpha^2,\alpha^2,1\rangle.$$
}\end{exmp}

\begin{rem}\label{rem-Q6}{\rm
We describe some simple properties of the simplex points $Q_1,\dots, Q_{6}$ which will be used in what follows.
It is clear that the line joining distinct $Q_i,Q_{j}$ is not simplex.
A direct verification shows that the line connecting $Q_1$ and $Q_{i}$, $i\ge 2$ does not contain any $Q_{k}$ with $k\ne 1,i$
and $P_{1}$ belongs to this line if and only if $i=4$.
Similarly, we establish that the line connecting  $Q_{i}$ and $Q_{j}$, $i<j$ does not contain any $Q_{k}$ with $k\ne i,j$
and $P_{1}$ belongs to this line only in the case when $j=i+3$.
In particular, any three distinct $Q_i$ are non-collinear.
}\end{rem}

\begin{rem}\label{rem3}{\rm
Since the group of monomial linear automorphisms of $V$ acts transitively on the sets of simplex points and simplex lines,
for every simplex point $P$ the hyperplane containing all simplex points adjacent to $P$ 
contains also precisely $6$ simplex points non-adjacent to $P$. 
}\end{rem}

By the symmetry, it is sufficient to prove Theorems \ref{theorem1} and \ref{theorem2} for the case when 
$L$ is the line considered in Examples 1 and \ref{exmp-main}.
In what follows, we will always assume that $L$ is this line.

Every monomial linear automorphism of $V$ can be presented as the composition 
$$d(a_{1},\dots,a_{5})p_{\sigma},$$
where $d(a_{1},\dots,a_{5})$ is the linear automorphism whose matrix with respect to the standard basis $e_{1},\dots,e_{5}$ is ${\rm diag}(a_{1},\dots,a_{5})$
and $p_{\sigma}$ is the linear automorphism sending every $e_i$ to $e_{\sigma(i)}$ for a given permutation $\sigma$ on the set $\{1,\dots,5\}$.

\begin{lemma}\label{lemma-q6}
Let $L$ and $Q_{1},\dots,Q_{6}$ be as in Example \ref{exmp-main}.
Then for any two distinct $i,j\in \{1,\dots,6\}$ there is a monomial linear automorphism of $V$
preserving $L$ and sending $Q_{i}$ to $Q_{j}$.
\end{lemma}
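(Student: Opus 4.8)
The plan is to exhibit, for each pair $i,j$, an explicit monomial linear automorphism of $V$ that preserves $L$ and carries $Q_i$ to $Q_j$. Since such automorphisms form a group, it suffices to show that the stabilizer of $L$ inside the group of monomial linear automorphisms acts transitively on $\{Q_1,\dots,Q_6\}$. Recall from the discussion preceding Theorem \ref{theorem2} that the projective transformations induced by monomial semilinear automorphisms preserving $L$ form the group ${\rm G}(L)\cong{\rm P}\Gamma{\rm L}(2,4)\cong S_5$ acting as the full symmetric group on the five points $P_1,\dots,P_5$ of $L$, and that the even permutations (the subgroup ${\rm PGL}(2,4)\cong A_5$) are exactly those induced by linear automorphisms. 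So I would work with the subgroup of ${\rm G}(L)$ realized by monomial \emph{linear} automorphisms and track how the corresponding permutations of $\{P_1,\dots,P_5\}$ move the six points $Q_1,\dots,Q_6$.

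First I would compute the induced action of each permutation-type element on the $Q_k$. The natural candidates are the automorphisms $p_\sigma$ (with $\sigma$ a permutation of the coordinate set $\{1,\dots,5\}$) and, when needed, compositions $d(a_1,\dots,a_5)p_\sigma$ chosen so that $L$ is preserved. The key observation is that $Q_1,\dots,Q_6$ all lie in $H_1\cap C_1$, i.e. they all have vanishing first coordinate and satisfy the relation $x_2+x_3+x_4+x_5=0$; each $Q_k$ is then distinguished by which coordinate positions carry the repeated value. For instance $Q_1=\langle 0,1,1,\alpha,\alpha\rangle$ pairs positions $\{2,3\}$ against $\{4,5\}$ with value-ratio $\alpha$, while the other $Q_k$ arise from the other $2$-subset partitions and from switching $\alpha\leftrightarrow\alpha^2$. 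This gives a combinatorial bookkeeping: a linear automorphism stabilizing $L$ and fixing $P_1$ permutes $\{P_2,P_3,P_4,P_5\}$, and by tracing the simplex-vector representatives one reads off the induced permutation of the six $Q_k$.

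The main step is then to show that the group generated by the available stabilizer elements acts transitively on the six points. I would identify two convenient generators: one realizing a $3$-cycle (say $Q_1\to Q_2\to Q_3\to Q_1$, induced by a $3$-cycle on $\{P_3,P_4,P_5\}$ that fixes $P_1,P_2$) and one realizing the involution $\alpha\leftrightarrow\alpha^2$ at the level of values, which swaps the triple $\{Q_1,Q_2,Q_3\}$ with $\{Q_4,Q_5,Q_6\}$ (this is the image of an element swapping a suitable coordinate pair, composed with a diagonal correction to keep $L$ invariant). Once a $3$-cycle within one triple and an interchange of the two triples are both in the image of the stabilizer, the orbit of $Q_1$ meets both triples and has size a multiple of $3$, forcing transitivity on all six points. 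Throughout I would verify invariance of $L$ by checking that the candidate automorphism permutes the rows of the matrix in Example \ref{exmp-main} up to nonzero scalars.

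The hard part will be the explicit construction of the stabilizer elements and the verification that each one simultaneously preserves $L$ and induces the asserted permutation of the $Q_k$; matching the diagonal scalars $a_1,\dots,a_5$ to the row-permutation $\sigma$ so that $d(a_1,\dots,a_5)p_\sigma$ fixes $L$ setwise is the delicate bookkeeping, since an arbitrary $p_\sigma$ will in general destroy the simplex structure of $L$ unless corrected. I would organize this by first determining, for each target permutation of $\{P_1,\dots,P_5\}$, the unique extension in ${\rm G}(L)$ and then checking it is linear (even permutation) by the parity criterion recalled above; transitivity on $\{Q_1,\dots,Q_6\}$ then follows from the group-theoretic reduction without needing to treat all $\binom{6}{2}$ pairs individually.
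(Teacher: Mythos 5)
Your proposal follows essentially the same route as the paper: the paper's proof simply exhibits the two generators you describe, namely $d(\alpha^2,1,1,1,1)p_{(3,4,5)}$ (fixing $P_1,P_2$ and inducing $(Q_1,Q_2,Q_3)(Q_4,Q_5,Q_6)$) and $p_{(2,5)(3,4)}$ (transposing $P_2,P_5$ and inducing $(Q_1,Q_4)(Q_2,Q_5)$), and concludes transitivity on $\{Q_1,\dots,Q_6\}$ from these. Your plan is sound; it only remains to carry out the explicit verification you defer, which is exactly the content of the paper's two-line argument.
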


\begin{proof}
The linear automorphism $$d(\alpha^2,1,1,1,1)p_{(3,4,5)}$$ preserves 
$L$ (since it leaves fixed $P_1$ and $P_2$) and induces on the set $\{Q_1,\dots,Q_{6}\}$
the permutation 
$$(Q_{1},Q_{2},Q_{3})(Q_{4},Q_{5},Q_{6}).$$
The linear automorphism $p_{(2,5)(3,4)}$
preserves $L$ (since it transposes $P_{2}$ and $P_{5}$) and induces on the set $\{Q_1,\dots,Q_{6}\}$
the permutation $(Q_1,Q_4)(Q_2,Q_5)$.
\end{proof}

\begin{lemma}\label{lemma-q6-2}
For every $i\in \{1,\dots,6\}$ there is a unique simplex line $L_i$ passing through $Q_i$ and intersecting every $H_j\cap C_j$ in a point distinct from $P_j$.
\end{lemma}

\begin{proof}
Let $x=(0,1,1,\alpha,\alpha)$.
Suppose that $L_{1}$ is a simplex line containing the point $Q_{1}=\langle x \rangle$  and intersecting each $H_{i}\cap C_{i}$
in a point $P'_{i}$ distinct from $P_i$. Then $P'_1=Q_1$ and $P'_1,\dots,P'_5$ are mutually distinct 
(if $P'_i=P'_j$ for some distinct $i,j$, then $P'_i=P'_j \subset C_i\cap C_j$ which contradicts the fact that $P'_i=P'_j$ is a simplex point).

There is a unique vector $y\in P'_2$ such that $x+y\in P'_{3}$.
The third coordinate of $x+y$ is $0$. Hence the third coordinate of $y$ is $1$, i.e.
$$y=(a,0,1,b,c).$$ 
Since the simplex points $Q_{1}=\langle x \rangle$ and $P'_2=\langle y \rangle$ are adjacent,
Proposition \ref{prop-ad} shows that 
$$1+\alpha^2 b+ \alpha^2 c=0.$$
Also, $y$ satisfies the equation (H2) from Example \ref{exmp-main} and we have
$$a+1+\alpha^2 b +\alpha c=0.$$
Joining these two equalities, we obtain that $a=c$. 
It follows from the first equality that $1,\alpha^2b, \alpha^2 c$ are mutually  distinct non-zero elements of the field
and one of the following possibilities is realized:
$$y=(\alpha^2, 0,1,1,\alpha^2)\;\mbox{ or }\;y=(1,0,1, \alpha^2,1).$$
Then
$$x+y=(\alpha^2, 1,0, \alpha^2,1)\;\mbox{ or }\; x+y=(1,1,0,1,\alpha^2),$$
respectively, 
and each of these vectors satisfies (H3) from Example \ref{exmp-main}.
So, we have determined  two possibilities for the triple of points which are the intersections of $L_1$ with $C_1,C_2,C_3$.
The remaining two points of  $L_1$ are defined by the vectors
$$\alpha^2 x+y= (\alpha^2, \alpha^2, \alpha, 0, \alpha),\; \alpha x+y= (\alpha^2, \alpha, \alpha^2, \alpha,0)$$
or the vectors
$$\alpha x+ y= (1,\alpha, \alpha^2, 0, \alpha), \; \alpha^2 x +y= (1,\alpha^2, \alpha,\alpha,0).$$
The fist two vectors satisfy (H4) and (H5), respectively. 
However, these equations do not hold for the second pair of vectors.
Therefore, there is a unique simplex line $L_1$ passing through $Q_1$ and intersecting each $H_{i}\cap C_{i}$
in a point distinct from $P_i$. This line consists of $Q_1$ and the following four points 
$$\langle \alpha^2, 0,1,1,\alpha^2 \rangle,\; \langle \alpha^2, 1,0, \alpha^2,1\rangle,\;
\langle \alpha^2, \alpha^2, \alpha, 0, \alpha \rangle,\; \langle \alpha^2, \alpha, \alpha^2, \alpha,0 \rangle.$$
By Lemma \ref{lemma-q6}, for every $i\in \{2,\dots,6\}$
there is a monomial linear automorphism $u$ preserving $L$ and sending $Q_{1}$ to $Q_{i}$.
Then $L_{i}=u(L_{1})$ is a simplex line passing through $Q_{i}$ and intersecting every $H_j\cap C_j$ in a point distinct from $P_j$.
If $L'$ is a simplex line satisfying the same conditions, then 
the simplex line $u^{-1}(L')$  passes through $Q_1$ and intersects every $H_j\cap C_j$ in a point distinct from $P_j$
which implies that $u^{-1}(L')=L_1$ and $L'=u(L_{1})=L_i$.
\end{proof}

Let ${\rm G}$ be the group generated by the linear automorphisms described in the proof of Lemma \ref{lemma-q6}.
This group acts transitively  on the set $\{Q_1,\dots, Q_6\}$.
For every $i\ge 2$ the line $L_i$ coincides with $u(L_1)$, where $u$ is any element of ${\rm G}$ sending $Q_1$ to $Q_i$.
Each $L_i$ intersects every $H_j\cap C_j$ in a $1$-dimensional subspace distinct from $P_j$
and we present $L_i$ as the matrix whose $j$-th row corresponds to a non-zero vector belonging to this $1$-dimensional subspace
(we always choose the vector whose first non-zero coordinate is $1$):
$$L_1=\left[\begin{array}{c@{\;}c@{\;}c@{\;}c@{\;}c}
0&1&1&\alpha&\alpha\\
1&0&\alpha&\alpha&1\\
1&\alpha&0&1&\alpha\\
1&1&\alpha^2&0&\alpha^2\\
1&\alpha^2&1&\alpha^2&0\\
\end{array}
\right],\,
L_2=
\left[\begin{array}{c@{\;}c@{\;}c@{\;}c@{\;}c}
0&1&\alpha&1&\alpha\\
1&0&\alpha&\alpha^2&\alpha^2\\
1&1&0&\alpha&1\\
1&\alpha^2&\alpha^2&0&\alpha\\
1&\alpha&1&1&0\\
\end{array}
\right],\,
L_3=
\left[\begin{array}{c@{\;}c@{\;}c@{\;}c@{\;}c}
0&1&\alpha&\alpha&1\\
1&0&1&\alpha^2&1\\
1&\alpha^2&0&\alpha&\alpha\\
1&\alpha&\alpha&0&\alpha^2\\
1&1&\alpha^2&1&0\\
\end{array}
\right],
$$
$$L_4=\left[\begin{array}{c@{\;}c@{\;}c@{\;}c@{\;}c}
0&1&1&\alpha^2&\alpha^2\\
1&0&\alpha^2&1&\alpha^2\\
1&\alpha^2&0&\alpha^2&1\\
1&\alpha&1&0&\alpha\\
1&1&\alpha&\alpha&0\\
\end{array}
\right],\,
L_5=
\left[\begin{array}{c@{\;}c@{\;}c@{\;}c@{\;}c}
0&1&\alpha^2&1&\alpha^2\\
1&0&1&1&\alpha\\
1&\alpha&0&\alpha^2&\alpha^2\\
1&1&\alpha&0&1\\
1&\alpha^2&\alpha^2&\alpha&0\\
\end{array}
\right],\,
L_6=
\left[\begin{array}{c@{\;}c@{\;}c@{\;}c@{\;}c}
0&1&\alpha^2&\alpha^2&1\\
1&0&\alpha^2&\alpha&\alpha\\
1&1&0&1&\alpha^2\\
1&\alpha^2&1&0&1\\
1&\alpha&\alpha&\alpha^2&0\\
\end{array}
\right]\,.
$$
A direct verification shows that any two distinct $L_{i},L_{j}$ are disjoint, 
i.e. they span a hyperplane which will be denoted by $S_{ij}$.

\begin{lemma}\label{lemma-Sij}
The hyperplane $S_{ij}$ does not contain any $L_{k}$ with $k\ne i,j$.
\end{lemma}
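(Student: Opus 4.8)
The plan is to recast the assertion as the distinctness of the $15$ hyperplanes $S_{ij}$. Since $L_i$ and $L_j$ are disjoint, $\dim(L_i+L_j)=4$, so each $S_{ij}=L_i+L_j$ is indeed a hyperplane. Suppose $L_k\subseteq S_{ij}$ for some $k\ne i,j$. As $L_i$ and $L_k$ are disjoint as well, $L_i+L_k$ is a $4$-dimensional subspace contained in $S_{ij}$, whence $L_i+L_k=S_{ij}$, i.e. $S_{ik}=S_{ij}$ although $\{i,k\}\ne\{i,j\}$. Conversely, if $S_{ab}=S_{cd}$ for two distinct pairs, then one of the four indices, say $c$, lies outside $\{a,b\}$, and $L_c\subseteq S_{cd}=S_{ab}$ violates the conclusion. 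Hence Lemma~\ref{lemma-Sij} is equivalent to the statement that the $15$ hyperplanes $S_{ij}$ are pairwise distinct, and this is what I would establish.

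To this end I would attach to each $S_{ij}$ its defining linear functional $f_{ij}\in V^{*}$, unique up to a scalar: reading two generators of $L_i$ and two of $L_j$ off the explicit matrices, the functional $f_{ij}$ spans the (one-dimensional) solution space of the associated homogeneous $4\times 5$ system over $\mathbb{F}_4$. Two of the hyperplanes coincide precisely when the corresponding functionals are proportional, so the problem becomes checking that the $f_{ij}$ determine $15$ distinct points of the dual projective space.

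To shorten the work I would use the group ${\rm G}$ generated by the two monomial linear automorphisms of Lemma~\ref{lemma-q6}. Each of them preserves $L$ and permutes $\{L_1,\dots,L_6\}$ (it permutes the points $Q_1,\dots,Q_6$ through which these lines pass), so ${\rm G}$ acts on the family of hyperplanes $S_{ij}$ in accordance with its action on the $2$-element subsets of $\{1,\dots,6\}$. I would first determine this latter action and find that it has exactly two orbits: the three pairs $\{Q_i,Q_{i+3}\}$ with $i\in\{1,2,3\}$ on one hand, and the remaining twelve pairs on the other. It then suffices to compute $f_{ij}$ for a single representative of each orbit and to transport the result across the orbit by the group, after which all $15$ functionals are available.

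The main obstacle is the concluding comparison. The group action only guarantees that functionals lying in the same orbit are carried onto one another; it does not, by itself, prevent two distinct pairs of the same orbit from yielding proportional functionals. Thus the non-proportionality has to be confirmed directly, both inside each of the two orbits and across them, by the (finite, but slightly tedious) $\mathbb{F}_4$ arithmetic. Once all $15$ functionals are seen to be pairwise non-proportional, the equivalence of the first paragraph yields the lemma.
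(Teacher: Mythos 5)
Your reduction is sound: since any two distinct $L_i,L_j$ are disjoint, $L_i+L_k$ is a hyperplane whenever $k\ne i$, so $L_k\subseteq S_{ij}$ with $k\ne i,j$ forces $S_{ik}=S_{ij}$, and conversely a coincidence $S_{ab}=S_{cd}$ of distinct pairs puts some $L_c$ with $c\notin\{a,b\}$ inside $S_{ab}$; hence the lemma is indeed equivalent to the pairwise distinctness of the $15$ hyperplanes, and that is a finite, checkable condition. This is, however, a genuinely different route from the paper's. The paper argues by contradiction using the geometry already set up around $L$: if $S_{1j}\supseteq L_k$ then $S_{1j}$ contains the three non-collinear points $Q_1,Q_j,Q_k$ (Remark on the $Q_i$'s), hence contains the plane $H_1\cap C_1$, and one shows $S_{1j}\cap C_1=H_1\cap C_1$; then the five vectors $x_{1t}$ (sums of the second rows of $L_1$ and $L_t$) all lie in $C_1$, but only $x_{14}$ satisfies the equation of $H_1$, so no two of $\langle x_{1j}\rangle,\langle x_{1k}\rangle$ can both lie in $S_{1j}$ --- a contradiction reached by testing just five vectors against a single linear form, after the same reduction to $i=1$ by transitivity of ${\rm G}$. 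Your plan instead computes all $15$ annihilating functionals $f_{ij}$ and compares them pairwise; the orbit analysis (two orbits on pairs, of sizes $3$ and $12$, which is correct for the generators of Lemma~\ref{lemma-q6}) only cuts down the number of functionals you must compute from scratch, not the roughly $\binom{15}{2}$ proportionality checks, and you rightly flag that these must still be done by hand. So the approach would work, but the decisive $\mathbb{F}_4$ computation is left unexecuted, and it is substantially heavier than the paper's five-vector check; what your version buys in exchange is a cleaner statement (the $S_{ij}$ are $15$ distinct hyperplanes) that does not depend on the auxiliary objects $H_1\cap C_1$ and $Q_1,\dots,Q_6$.
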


\begin{proof}
Without loss of generality we can assume that $i=1$ (since the group ${\rm G}$ acts transitively on the set $\{L_1,\ldots ,L_6\}$).
Suppose that $S_{1j}$ contains a certain $L_{k}$ with $k\ne 1,j.$
Then $S_{1j}$ contains the points $Q_{1},Q_{j},Q_{k}$. 
By Remark \ref{rem-Q6}, these points are non-collinear and span the projective plane $H_{1}\cap C_1$
which means that $H_{1}\cap C_1\subset S_{1j}$. 
Every (projective) point $P\subset S_{1j}\cap C_1$ is contained in $H_1$
(otherwise $P$ and $H_{1}\cap C_1$ span $S_{1j}$ and we obtain that $S_{1j}$ coincides with $C_1$ which is impossible).
Therefore, 
$$S_{1j}\cap C_1=H_{1}\cap C_1.$$
Let $x_{1t}$ be the sum of the second rows from the matrices corresponding to $L_1$ and $L_t$.
Then the points $\langle x_{1j}\rangle $ and $\langle x_{1k}\rangle$ are contained in $S_{1j}$ (since $L_1,L_j,L_k\subset S_{1j}$). 
All the vectors 
$$x_{12}=(0,0,0,1,\alpha),\; x_{13}=(0,0,\alpha^2,1,0),\; x_{14}=(0,0,1,\alpha^2,\alpha),$$
$$x_{15}=(0,0,\alpha^2,\alpha^2,\alpha^2),\; x_{16}=(0,0,1,0,\alpha^2),$$
belong to $C_1$,
but only $x_{14}$ is contained in $H_1$ (recall that $H_1$ is defined by the equation $x_2+x_{3}+x_{4}+x_{5}=0$).
So, there is no pair of distinct $j,k$ such that both $\langle x_{1j}\rangle $ and $\langle x_{1k}\rangle$ are contained in $S_{1j}$
and we get a contradiction.
\end{proof} 

\begin{lemma}\label{lemma-q}
For every simplex point $Q$ contained in a certain $H_i\cap C_{i}$ and distinct from $P_i$
there is a unique $L_{j}$ passing through $Q$.
\end{lemma}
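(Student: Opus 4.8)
The plan is to fix the coordinate index $i$ and prove the sharper statement that the six lines $L_1,\dots,L_6$ meet $H_i\cap C_i$ in exactly the six simplex points of $H_i\cap C_i$ distinct from $P_i$, each line contributing a \emph{distinct} point. The lemma is then immediate: an arbitrary simplex point $Q\in H_i\cap C_i$ with $Q\ne P_i$ is one of these six points, and so lies on exactly one $L_j$. Observe first that the index $i$ is determined by $Q$, since a simplex point has a single zero coordinate and therefore lies in exactly one coordinate hyperplane $C_i$; thus the statement is unambiguous.

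First I would assemble the two inputs furnished by the construction. By Lemma \ref{lemma-q6-2} (and the explicit description of the $L_j$ following it), each $L_j$ meets $H_i\cap C_i$ in a single point distinct from $P_i$; since $L_j$ is a simplex line it crosses $C_i$ in exactly one point, so this point is precisely $R_j:=L_j\cap C_i$, and it is a simplex point. On the other hand, by Remark \ref{rem3} the hyperplane $H_i$ contains precisely six simplex points non-adjacent to $P_i$, and by the characterization preceding Lemma \ref{lemma-d3} these are exactly the simplex points lying in $H_i\cap C_i$ and distinct from $P_i$. Hence $H_i\cap C_i$ carries exactly six simplex points distinct from $P_i$, and we already have six candidates $R_1,\dots,R_6$ sitting among them.

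The crux is to show that $R_1,\dots,R_6$ are pairwise distinct, and here I would invoke the disjointness of $L_1,\dots,L_6$ (verified directly in the paragraph preceding Lemma \ref{lemma-Sij}): if $R_j=R_{j'}$ for some $j\ne j'$, then $L_j$ and $L_{j'}$ would share this point, contradicting their disjointness. Thus $j\mapsto R_j$ injects a six-element set into a six-element set and is therefore a bijection onto the full collection of simplex points of $H_i\cap C_i$ distinct from $P_i$. As a consistency check, since each simplex point lies in a unique $C_i$, letting $i$ range over $\{1,\dots,5\}$ accounts for the $5\cdot 6=30=6\cdot 5$ points in question, one per line per coordinate.

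Finally I would read off both halves of the conclusion from this bijection. Given a simplex point $Q\in H_i\cap C_i$ with $Q\ne P_i$, surjectivity produces an index $j$ with $Q=R_j=L_j\cap C_i$, so that $L_j$ passes through $Q$; and if some $L_{j'}$ also passed through $Q$, then $Q\in C_i$ would force $R_{j'}=L_{j'}\cap C_i=Q=R_j$, whence $j'=j$ by injectivity. I do not anticipate any genuine obstacle: the only nontrivial ingredients are the disjointness of the $L_j$, which is already established, and the exact count of six points from Remark \ref{rem3}; everything else is a pigeonhole bijection.
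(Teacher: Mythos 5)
Your proof is correct and uses the same essential ingredients as the paper's: the count of six simplex points in each $H_i\cap C_i$ distinct from $P_i$ (Remark \ref{rem3}), the fact that each $L_j$ meets every $H_i\cap C_i$ in such a point (Lemma \ref{lemma-q6-2}), and the pairwise disjointness of $L_1,\dots,L_6$, combined by a pigeonhole count. The paper counts globally ($30$ points covered by six disjoint $5$-point lines) whereas you count per coordinate hyperplane (six lines injecting into six points), but this is only a difference of bookkeeping, not of method.
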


\begin{proof}
Denote by ${\mathcal X}$ the set of all simplex points $Q$ such that $Q$ is contained in a certain $H_i\cap C_{i}$ and distinct from $P_i$.
Observe that the intersection of $H_{i}\cap C_{i}$ and $H_{j}\cap C_{j}$ does not contain simplex points if $i\ne j$.
Since every $H_i\cap C_{i}$ contains precisely $6$ simplex points distinct from $P_i$ (Remark \ref{rem3}), 
the set ${\mathcal X}$ consists of $5\cdot 6=30$ points. 
Every $L_{i}$ is formed by $5$ points from the set ${\mathcal X}$ and
the statement (1) follows from the fact that the lines $L_{1},\dots, L_6$ are mutually disjoint. 
\end{proof}

\begin{prop}\label{prop1-conn}
$\Gamma$ is a connected graph of diameter $3$. 
The distance between $L$ and a simplex line $L'$ is equal to $3$ if and only if $L'\in \{L_1,\dots,L_6\}$.
\end{prop}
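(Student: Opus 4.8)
The plan is to use the preceding lemmas to reduce everything to the single line $L$ fixed in Examples~1 and \ref{exmp-main}; since the group of monomial linear automorphisms acts transitively on simplex lines, it suffices to describe all distances from this $L$. First I would translate the problem into condition (1) of Lemma~\ref{lemma-d3}: a simplex line $L'$ fails to be joined to $L$ by a path of length at most $2$ precisely when $L'$ meets every $H_i$ in a point of $H_i\cap C_i$ distinct from $P_i$. Thus the whole argument splits into identifying the lines satisfying this condition and then bounding the distance of those lines from $L$ above by $3$.

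The first main step is to show that condition (1) of Lemma~\ref{lemma-d3} holds exactly for the six lines $L_1,\dots,L_6$. That each $L_i$ satisfies it is immediate from its construction in Lemma~\ref{lemma-q6-2}. For the converse, suppose $L'$ satisfies (1). Its intersection with $H_1$ is then a simplex point of $H_1\cap C_1$ distinct from $P_1$, hence equals some $Q_m$; and since $L'$ meets each $H_j$ inside $H_j\cap C_j$ in a point distinct from $P_j$, the line $L'$ passes through $Q_m$ and intersects every $H_j\cap C_j$ in a point distinct from $P_j$. The uniqueness clause of Lemma~\ref{lemma-q6-2} therefore forces $L'=L_m$. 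Combined with Lemma~\ref{lemma-d3}, this identifies $L_1,\dots,L_6$ as exactly the simplex lines at distance greater than $2$ from $L$ (or not joinable to $L$), every other simplex line being joined to $L$ by a path of length at most $2$.

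The crux is the second step, proving that each $L_i$ is in fact at distance exactly $3$. By Proposition~\ref{prop-element} the point $Q_i$ lies on $(q-1)!=6$ simplex lines, and by Lemma~\ref{lemma-q} the only member of $\{L_1,\dots,L_6\}$ through $Q_i$ is $L_i$. I would then choose any of the remaining five simplex lines $M$ through $Q_i$: by the first step $M\notin\{L_1,\dots,L_6\}$, so $L$ and $M$ are at distance at most $2$, while $M$ and $L_i$ share only the point $Q_i$ and are therefore adjacent in $\Gamma$. This yields distance at most $3$ between $L$ and $L_i$, and together with the strict inequality from the first step gives distance exactly $3$ (in particular $L_i$ is connected to $L$). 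This auxiliary-line argument is the one genuinely new idea needed here; the deeper structural difficulty — constructing the $L_i$ and establishing their uniqueness — has already been absorbed into Lemmas~\ref{lemma-q6-2} and \ref{lemma-q}, so the present proposition is essentially their assembly.

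Finally, every simplex line now lies at distance at most $3$ from $L$, so $\Gamma$ is connected; transitivity of the automorphism group on simplex lines propagates this bound to every vertex, and since the value $3$ is attained, for instance between $L$ and $L_1$, the diameter equals $3$, the distance-$3$ lines from $L$ being precisely $L_1,\dots,L_6$.
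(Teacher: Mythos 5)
Your proof is correct and follows essentially the same route as the paper: both identify $\{L_1,\dots,L_6\}$ as exactly the lines satisfying condition (1) of Lemma~\ref{lemma-d3} via the uniqueness clause of Lemma~\ref{lemma-q6-2}, and then bound the distance to each $L_i$ by $3$ using an adjacent simplex line outside $\{L_1,\dots,L_6\}$. Your only addition is to make that last step explicit by counting the six simplex lines through $Q_i$ and invoking Lemma~\ref{lemma-q}, which the paper leaves as an unelaborated remark.
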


\begin{proof}
The condition (1) of Lemma \ref{lemma-d3} holds for every $L'\in \{L_{1},\dots,L_{6}\}$.
Conversely, if $L'$ satisfies this condition, then $L'$ intersects $H_1\cap C_1$ in a certain $Q_{j}$ and Lemma \ref{lemma-q6-2} implies that  $L'\in \{L_{1},\dots,L_{6}\}$.
Therefore, a simplex line $L'$ satisfies the condition (1) of Lemma \ref{lemma-d3} if and only if $L'\in \{L_{1},\dots,L_{6}\}$.

The required statement follows from the fact that every $L_i$ is adjacent to a certain simplex line $L'\not\in \{L_{1},\dots,L_{6}\}$. 
\end{proof}

\begin{lemma}\label{lemma-L'}
Every simplex line $L'$ non-adjacent to $L$ intersects the hyperplanes $H_1,\dots,H_5$ in mutually distinct points.
\end{lemma}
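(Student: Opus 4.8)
The plan is to reformulate the statement in terms of the five points of $L'$ and the adjacency relation, and then to exclude the two ways it can fail. Write $P'_m=L'\cap C_m$ for the five points of $L'$. The basic dictionary is supplied by Proposition~\ref{prop-ad}: for an index $i\neq m$ one has $P'_m\in H_i$ if and only if $P'_m$ is adjacent to $P_i$ (the coordinate condition $y_i\neq 0$ is automatic, since $P'_m\in C_m$ with $m\neq i$ forces the $i$-th coordinate of $P'_m$ to be nonzero); whereas $P'_m\in H_m$ forces $P'_m\in H_m\cap C_m$, i.e.\ $P'_m$ is one of the six simplex points of $H_m\cap C_m$ distinct from $P_m$ (Remark~\ref{rem3}). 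Since every point of $L$ is adjacent to $P_i$, we have $L\subseteq H_i$ for all $i$, and because $L\cap L'=0$ (this is what non-adjacency of $L'$ to $L$ means), each $L'\cap H_i$ is either a single point of $L'$ or all of $L'$. Consequently the lemma is equivalent to the conjunction of (i) $L'\not\subseteq H_i$ for every $i$, and (ii) no point of $L'$ lies in two distinct $H_i$.

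For (i), suppose $L'\subseteq H_i$. By the dictionary the four points $P'_k$ with $k\neq i$ all satisfy $P'_k\in H_i$, hence are adjacent to $P_i$. Pick two of them, $P'_k$ and $P'_l$. The three simplex lines $P_iP'_k$, $P_iP'_l$ and $P'_kP'_l=L'$ are pairwise adjacent and meet in the three points $P_i,P'_k,P'_l$, which are distinct and non-collinear ($P_i\in L$ while $P'_k,P'_l\in L'$ and $L\cap L'=0$). This contradicts Proposition~\ref{prop-cliq}. The same triangle argument disposes of one case of (ii): if the common point $R=L'\cap H_i=L'\cap H_j$ equals $P'_m$ with $m\notin\{i,j\}$, then $R$ is adjacent to both $P_i$ and $P_j$, and the three mutually adjacent simplex lines $RP_i$, $RP_j$ and $P_iP_j=L$ meet in three distinct non-collinear points $R,P_i,P_j$, again contradicting Proposition~\ref{prop-cliq}.

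The remaining case of (ii) is the main obstacle, because it is not purely local: suppose $R=L'\cap H_i=L'\cap H_j$ equals $P'_i$ (the point of $L'$ on $C_i$). Then, on the one hand, $R\in H_i\cap C_i$ and $R\neq P_i$, so by the dictionary $R$ is one of the six special points of $H_i\cap C_i$; on the other hand $R\in H_j$ with $R\notin C_j$, so $R$ is adjacent to $P_j$. To derive a contradiction I would invoke Lemma~\ref{lemma-q}: the special point $R$ lies on a unique line $L_t\in\{L_1,\dots,L_6\}$, which by Proposition~\ref{prop1-conn} is at distance $3$ from $L$ and hence disjoint from $L$. But a point of a distance-$3$ line cannot be adjacent to any point of $L$: if $S\in L_t$ were adjacent to some $P_j\in L$, the simplex line $SP_j$ would be distinct from both $L$ and $L_t$ (as $S\notin L$ and $P_j\notin L_t$) and adjacent to each, forcing $\mathrm{dist}(L,L_t)\le 2$. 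Applying this to $S=R$ contradicts the adjacency of $R$ and $P_j$ established above. This rules out the last case of (ii) and completes the proof. The key point to emphasize is that, unlike the other cases, this one cannot be settled by the forbidden-triangle criterion alone and genuinely uses the classification of the distance-$3$ lines from Lemma~\ref{lemma-q6-2}, Lemma~\ref{lemma-q} and Proposition~\ref{prop1-conn}.
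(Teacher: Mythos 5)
Your proof is correct and follows essentially the same route as the paper's: reduce to a point of $L'$ lying in two distinct $H_i$, split on which coordinate hyperplane contains that point, kill the generic case with the forbidden-triangle criterion of Proposition~\ref{prop-cliq}, and kill the remaining case by producing a line adjacent to both $L$ and some $L_t$, contradicting $\mathrm{dist}(L,L_t)=3$ via Lemma~\ref{lemma-q} and Proposition~\ref{prop1-conn}. The only cosmetic difference is that you treat $L'\subseteq H_i$ as a separate case, whereas the paper subsumes it in the same point-in-two-hyperplanes dichotomy.
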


\begin{proof}
For the lines $L_{1},\dots,L_{6}$ it was established above (see the proof of Lemma \ref{lemma-q6-2}).
Consider the general case.
Suppose that the statement fails and $L'$ contains a point $Q$ belonging to distinct $H_i$ and $H_j$.
The point $Q$ is contained in a unique coordinate hyperplane $C_k$ (as a simplex point).
If $k=i$, then $Q$ is in $H_i\cap C_i$ which means that it belong to a certain $L_t$. 
Since the intersection of $H_{i}\cap C_{i}$ and $H_{j}\cap C_{j}$ does not contain simplex points,
$Q$ is adjacent to $P_{j}$. 
Then the simplex lines $L_t$ and $Q+P_{j}$ are adjacent  and the distance between $L_{t}$ and $L$ is not greater than $2$,
a contradiction.   
The case when $k=j$ is similar.
So, $k$ is distinct from $i$ and $j$.
Then $Q$ is adjacent to both $P_i, P_j$ and the simplex lines $Q+P_i,Q+P_j,L$ form a clique of $\Gamma$ which is impossible by Proposition \ref{prop-cliq}.
\end{proof}

Let $L'$ be a simplex line distinct from $L$ and non-adjacent to it.
Then $L'$ intersects each $H_j$ in a point distinct from $P_j$.
Denote by  $n(L')$ the number of all indices $j$ such that $L'$ intersects $H_j$ in a point contained in $H_j\cap C_{j}$.
If $L'\in \{L_1,\dots,L_{6}\}$, then $n(L')=5$.
In the case when $L'\not\in \{L_1,\dots,L_{6}\}$, 
Lemma \ref{lemma-q} shows that $n(L')$ is the number of $L_i$ intersecting $L'$.

\begin{lemma}\label{lemma-nL}
If $L'$ is distinct from $L_1,\dots,L_6$,
then $n(L')$ is equal to  $0,1$ or $3$. 
\end{lemma}

\begin{proof}
Let $P'_i$ be the intersection of $L'$ with the hyperplane $H_i$.
By Lemma \ref{lemma-L'}, $P'_1,\dots,P'_5$ are mutually distinct.
Since $L$ and $L'$ are disjoint, $P'_{i}$ and $P_{j}$ are distinct for all $i,j\in \{1,\dots,5\}$.

The hyperplanes $H_1,\dots,H_5$ are defined by the following equations:
\begin{enumerate}
\item[(H1)] $x_2+x_{3}+x_{4}+x_{5}=0$,
\item[(H2)] $x_{1}+x_{3}+\alpha^2 x_{4} +\alpha x_5=0$,
\item[(H3)] $x_{1}+x_{2}+\alpha x_{4} +\alpha^2 x_5=0$,
\item[(H4)] $x_{1}+\alpha^2 x_2+ \alpha x_3 + x_5=0$,
\item[(H5)] $x_{1}+\alpha x_2+ \alpha^2 x_3 +x_{4}=0$,
\end{enumerate}
see Example \ref{exmp-main}.
In what follows the equation ${\rm (Hi)}$, ${\rm i}=1,\dots,5$ will be denoted as 
$$A_{i}(x)=0,\;\;x=(x_{1},\dots,x_5).$$
Observe that
$$A_3=A_1+A_2,\;\;A_4=\alpha^2 A_1+A_2,\;\;A_5=\alpha A_1+A_2.$$
For any vectors 
$$x=(x_{1},\dots,x_{5})\in P'_1\;\mbox{ and }\;y=(y_{1},\dots,y_{5})\in P'_2$$
we have $A_{1}(x)=A_{2}(y)=0$.
We choose $x$ and $y$ satisfying 
$$A_{1}(y)=A_{2}(x)$$
and obtain that
$$x+y\in P'_3,\;\; \alpha^2 x+y\in P'_4,\;\;\alpha x +y\in P'_5.$$
Indeed, the equality $A_{1}(x)=A_{2}(y)=A_{1}(y)+A_{2}(x)=0$ shows that 
$$A_{3}(x+y)=(A_1+A_2)(x+y)=A_1(x)+A_{2}(x)+A_{1}(y)+A_{2}(y)=0.$$
and
$$A_{4}(\alpha^2x+y)=(\alpha^2 A_1+A_2)(\alpha^2 x+y)=\alpha^4A_1(x)+\alpha^2A_{2}(x)+\alpha^2A_{1}(y)+A_{2}(y)=0.$$
Similarly, we establish that $A_{5}(\alpha x+y)=0$.

Suppose that $n(L')\ge 2$, i.e. there are two distinct $i,j$ such that $P'_i$ and $P'_j$ are contained in $H_i\cap C_i$ and $H_j\cap C_j$,
respectively.
Without loss of generality, we can assume that these indices are $1$ and $2$ 
(for any distinct $i,j$ there is a monomial linear automorphism preserving $L$ and transferring $P_{i},P_{j}$ to $P_{1},P_2$).
Then $x_{1}=y_2=0$ and we rewrite
the condition $A_{1}(y)=A_{2}(x)$ as follows
$$y_3 +y_4 + y_5 = x_3 +\alpha^2 x_{4} +\alpha x_5$$
or 
$$\underbrace{y_3 +x_{3}}_{a}+\underbrace{y_4 + \alpha^2 x_4}_{b}+ \underbrace{y_5 + \alpha x_5}_{c}=0.$$
So, we have $a +b +c=0$,
where 
\begin{enumerate} 
\item[$\bullet$] $a$ is the $3$-rd coordinate of $x+y\in P'_3$,
\item[$\bullet$] $b$ is the $4$-th coordinate of $\alpha^2 x +y \in P'_4$,
\item[$\bullet$] $c$ is the $5$-th coordinate of $\alpha x +y\in P'_5$.
\end{enumerate}
If all these numbers are zero, then each $P'_j$ is contained in $H_j\cap C_j$ and  $L'$ coincides with a certain $L_i$ which contradicts our assumption.  
If two of these numbers are zero, then the remaining number also is zero and we come to the previous case. 
If precisely one of these numbers is zero, then $n(L')=3$.
To complete the proof we need to show that the case when each of these numbers is non-zero, i.e. $n(L')=2$, is impossible.

If $a,b,c$ are mutually distinct non-zero-elements of the field,  then one of the following possibilities is realized:
\begin{enumerate}
\item[(A)] $a=y_3+x_3,$ $b=\alpha a=y_4+\alpha^2x_4$, $c=\alpha^2 a=y_5+\alpha x_5$
which implies that
$$a=y_3+x_3=\alpha^2 y_4+\alpha x_4=\alpha y_5+\alpha^2 x_5.$$
\item[(B)] $a=y_3+x_3$, $b=\alpha^2 a=y_4+\alpha^2x_4$, $c=\alpha a=y_5+\alpha x_5$ which implies that
$$a=y_3+x_3=\alpha y_4+x_4=\alpha^2 y_5+x_5.$$
\end{enumerate}
Each of the vectors $y+x, y+\alpha^2 x, y+\alpha x$ has precisely one zero coordinate (as a simplex vector).
Since the first and second coordinates of these vectors are non-zero,
we have the following possibilities:
\begin{enumerate}
\item[$\bullet$] $4$-th or $5$-th  coordinate of $y+x$ is $0$, 
\item[$\bullet$]  $3$-rd or $5$-th coordinate of $y+\alpha^2 x$ is $0$,
\item[$\bullet$]  $3$-rd or $4$-th coordinate of $y+\alpha x$ is $0$. 
\end{enumerate}
We come to the following cases:
\begin{enumerate}
\item[(I)] $4$-th coordinate of  $y+x$, $5$-th coordinate of $y+\alpha^2 x$ and $3$-rd coordinate of $y+\alpha x$ are zero, i.e.
$$y_4=x_4,\;\;\; y_5=\alpha^2 x_5, \;\;\; y_3=\alpha x_3.$$
\item[(II)] $5$-th coordinate of  $y+x$, $3$-rd coordinate of $y+\alpha^2 x$ and $4$-th coordinate of $y+\alpha x$ are zero, i.e.
$$y_5=x_5,\;\;\; y_3=\alpha^2 x_3, \;\;\; y_4=\alpha x_4.$$
\end{enumerate}
Combining (A) and (I) we obtain that 
$$x_3=\alpha a,\;\;\;x_4=a,\;\;\;x_5=\alpha^2 a.$$
Then $A_{1}(x)=0$ implies that 
$$x_2=x_3+x_4+x_5=\alpha a+a+\alpha^2 a=0$$
which contradict that $x$ is a simplex vector
(recall that $x_1=0$ by our assumption).
Similarly, (A) and (II) show that $x_1=x_2=0$ and we get a contradiction again.

Using (B) and (I) we establish that
$$y_3=\alpha^2 a,\;\;\;y_4=\alpha a,\;\;\;y_5= a.$$
Then $A_{2}(y)=0$ implies that
$$y_1=y_3+\alpha^2 y_4+\alpha y_5=\alpha^2 a + a + \alpha a =0$$
and the vector $y$ is not simplex (since $y_2=0$ by our assumption).
Similarly, (B) and (II) show that $y_1=y_2=0$ and we come to a contradiction.
\end{proof} 

\subsection{Action of the group ${\rm G}(L)$}
In this subsection, we describe the action of the group ${\rm G}(L)$ on the set of all simplex lines 
and complete the proof of Theorems \ref{theorem1} and \ref{theorem2}. 
Recall that ${\rm G}(L)$ is the group of all projective transformations induced by monomial semilinear automorphisms of $V$ preserving $L$.
This group is isomorphic to ${\rm P\Gamma L}(2, 4)$ and the permutation group $S_5$, i.e. is of order $120$.
The subgroup of ${\rm G}(L)$ formed by transformations induced by linear automorphisms is isomorphic to ${\rm PGL}(2,4)$
and the alternating  group $A_5$, i.e. is of order $60$.
Therefore, every permutation of the points of $L$ can be uniquely extended to an element of ${\rm G}(L)$;
an element of ${\rm G}(L)$ is induced by a linear automorphism if and only if it gives an even permutation of the points of $L$.

\begin{prop}\label{prop1-1}
The set $\{L_1,\dots,L_6\}$ is an orbit of the action of ${\rm G}(L)$.
\end{prop}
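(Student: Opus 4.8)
The plan is to exhibit a single orbit by combining two facts: that the subgroup ${\rm G}\le{\rm G}(L)$ already carries $L_1$ onto every $L_i$, and that the whole of ${\rm G}(L)$ cannot move any $L_i$ off the set $\{L_1,\dots,L_6\}$. Together these pin the orbit of $L_1$ down to exactly $\{L_1,\dots,L_6\}$.

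First I would record that the group ${\rm G}$ introduced just after Lemma \ref{lemma-q6-2} is a subgroup of ${\rm G}(L)$: it is generated by monomial linear automorphisms preserving $L$, and each such automorphism induces a projective transformation preserving $L$, i.e.\ an element of ${\rm G}(L)$. It was already observed there that ${\rm G}$ acts transitively on $\{Q_1,\dots,Q_6\}$ and that $L_i=u(L_1)$ whenever $u\in{\rm G}$ sends $Q_1$ to $Q_i$. Consequently ${\rm G}$, and a fortiori ${\rm G}(L)$, acts transitively on $\{L_1,\dots,L_6\}$, so the ${\rm G}(L)$-orbit of $L_1$ contains all six lines.

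Second, I would prove the reverse inclusion, namely that ${\rm G}(L)$ maps the set $\{L_1,\dots,L_6\}$ into itself. Any $g\in{\rm G}(L)$ is induced by a monomial semilinear automorphism $\tilde g$ of $V$ preserving $L$. Such a $\tilde g$ carries simplex points to simplex points and simplex lines to simplex lines and preserves adjacency, so $g$ is an automorphism of the graph $\Gamma$; since $g(L)=L$, it preserves the distance from $L$, and in particular it permutes the set of simplex lines lying at distance $3$ from $L$. By Proposition \ref{prop1-conn} this set is exactly $\{L_1,\dots,L_6\}$, whence $g(\{L_1,\dots,L_6\})=\{L_1,\dots,L_6\}$.

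Combining the two steps, the orbit of $L_1$ under ${\rm G}(L)$ is contained in $\{L_1,\dots,L_6\}$ by the second step and contains every $L_i$ by the first, so it equals $\{L_1,\dots,L_6\}$; thus this set is a single orbit. The only point requiring care, and the nearest thing to an obstacle, is the verification that elements of ${\rm G}(L)$ genuinely act as automorphisms of $\Gamma$ fixing $L$, so that the distance-$3$ characterization of Proposition \ref{prop1-conn} may be invoked; once this is granted, the argument is immediate and needs no further computation.
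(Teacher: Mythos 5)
Your proof is correct and follows essentially the same route as the paper's: transitivity comes from the group ${\rm G}$ constructed after Lemma \ref{lemma-q6-2} (which carries $L_1$ to each $L_i$), and invariance of the set $\{L_1,\dots,L_6\}$ comes from the fact that elements of ${\rm G}(L)$ are automorphisms of $\Gamma$ fixing $L$, hence preserve the distance-$3$ characterization of Proposition \ref{prop1-conn}. Your version merely spells out the graph-automorphism step that the paper leaves implicit.
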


\begin{proof}
Proposition \ref{prop1-conn} shows that every element of ${\rm G}(L)$ preserves $\{L_1,\dots,L_6\}$.
It was established in the previous subsection that for any distinct $i,j\in \{1,\dots,6\}$
there is an element of ${\rm G}(L)$ transferring $L_{i}$ to $L_j$. 
\end{proof}

In the proof of Lemma \ref{lemma-q6-2}, we have constructed the simplex line formed by the points
$$
\langle x \rangle=\langle 0,1,1,\alpha,\alpha\rangle,\;\;
\langle y \rangle=\langle1,0,1,\alpha^2,1\rangle,\;\;
\langle x+y \rangle=\langle 1,1,0,1,\alpha^2\rangle,$$
$$\langle \alpha x+ y \rangle=\langle 1,\alpha,\alpha^2,0,\alpha\rangle,\;\;
\langle \alpha^2 x +y \rangle=\langle 1,\alpha^2,\alpha,\alpha,0\rangle
$$
and such that 
$$x\in H_1\cap C_1,\;y\in H_2\cap C_2,\;x+y\in H_3\cap C_3,\;\alpha x+ y\not\in H_4\cap C_4,\;\alpha^2 x +y\not\in H_5\cap C_5.$$
This line intersects $L_{1}$, $L_3$ and $L_6$ in the points $\langle x\rangle$, $\langle y \rangle$ and $\langle x+y\rangle$,
respectively, and does not intersect other $L_i$ 
(see the matrix presentations of $L_{1},\dots,L_{6}$ after the proof of Lemma \ref{lemma-q6-2}). 
For this reason we denote this line by $L_{136}$.
It was noted above that  $n(L_{136})=3$.
The line $L_{136}$ is identified with the matrix
$$\left[\begin{array}{c@{\;}c@{\;}c@{\;}c@{\;}c}
0&1&1&\alpha&\alpha\\ 
1&0&1&\alpha^2&1\\
1&1&0&1&\alpha^2\\
1&\alpha&\alpha^2&0&\alpha\\ 
1&\alpha^2&\alpha&\alpha&0\\ 
\end{array}
\right]$$
(whose rows correspond to non-zero vectors belonging to the intersections of $L_{136}$ with $C_i$).

\begin{lemma}\label{lemma-G}
${\rm G}(L_{136})\cap {\rm G}(L)$ is a group of order $6$.
\end{lemma}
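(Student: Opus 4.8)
The plan is to read ${\rm G}(L)\cap{\rm G}(L_{136})$ as the stabilizer of the line $L_{136}$ inside the group ${\rm G}(L)$ acting on the set of all simplex lines, and to compute its order by orbit/stabilizer bookkeeping on the six lines $L_1,\dots,L_6$. First I would record the reduction: by Proposition \ref{prop1-1} every element of ${\rm G}(L)$ permutes $\{L_1,\dots,L_6\}$, and since $L_{136}$ meets exactly $L_1,L_3,L_6$ among these six lines, any $g\in{\rm G}(L)$ with $g(L_{136})=L_{136}$ must preserve the $3$-element set $\{L_1,L_3,L_6\}$ (it sends lines meeting $L_{136}$ to lines meeting $g(L_{136})=L_{136}$). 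Hence ${\rm G}(L)\cap{\rm G}(L_{136})$ is contained in the setwise stabilizer $St$ of $\{L_1,L_3,L_6\}$ in ${\rm G}(L)$.

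Next I would pin down the action of ${\rm G}(L)\cong S_5$ on the six lines. By Proposition \ref{prop1-1} this action is transitive, so a point stabilizer has order $20$; but an order-$20$ subgroup of $S_5$ is the normalizer of a Sylow $5$-subgroup (a Frobenius group $F_{20}$), so the action is forced to be the sharply $3$-transitive action of ${\rm PGL}(2,5)\cong S_5$ on the six points of ${\rm PG}(1,5)$. In particular ${\rm G}(L)$ is regular on ordered triples of distinct $L_i$, so the setwise stabilizer $St$ of the unordered triple $\{L_1,L_3,L_6\}$ has order exactly $3!=6$. This already gives $|{\rm G}(L)\cap{\rm G}(L_{136})|\le 6$.

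It remains to prove the reverse inequality, and this is the main obstacle, since the upper bound above is purely group-theoretic while the lower bound carries the geometric content. The key observation is that the three points $\langle x\rangle\in L_1$, $\langle y\rangle\in L_3$, $\langle x+y\rangle\in L_6$ are collinear (they all lie on $L_{136}$) and are precisely the points of $L_{136}$ lying on some $L_i$; therefore any $g\in{\rm G}(L)$ that merely permutes these three points automatically carries the line through them, $L_{136}$, onto itself. So it suffices to exhibit, for each of the $6$ permutations of $\{\langle x\rangle,\langle y\rangle,\langle x+y\rangle\}$, a monomial semilinear automorphism preserving $L$ that realizes it; concretely, an order-$3$ element cycling these three points together with a transposition of two of them generate a subgroup $S_3\subseteq{\rm G}(L)\cap{\rm G}(L_{136})$. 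Equivalently, one shows that $L_{136}$ is the unique simplex line meeting all three of $L_1,L_3,L_6$: a simplex line $L'$ meeting at least two of the $L_i$ is non-adjacent to $L$ (otherwise some $L_i$ would be within distance $2$ of $L$), lies outside $\{L_1,\dots,L_6\}$ (the latter are pairwise disjoint), and has $n(L')=3$ by Lemma \ref{lemma-nL}; a direct coordinate chase in the spirit of Lemma \ref{lemma-q6-2} then forces $L'=L_{136}$, so every element of $St$ fixes $L_{136}$ and $St={\rm G}(L)\cap{\rm G}(L_{136})$ has order $6$. I expect this last step — producing the six automorphisms, or equivalently the uniqueness of the transversal $L_{136}$ — to be the genuinely computational heart of the proof.
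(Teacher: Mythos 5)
Your strategy is sound and genuinely different from the paper's. The paper works inside ${\rm G}(L_{136})$ acting on the five points of $L_{136}$: it sandwiches ${\rm G}={\rm G}(L_{136})\cap{\rm G}(L)$ between an explicit order-$6$ subgroup (generated by $d(1,1,1,\alpha,\alpha^2)p_{(1,2,3)}$ and $p_{(2,3)(4,5)}$) and the order-$12$ stabilizer of the partition of the points of $L_{136}$ into the three points on $L_1,L_3,L_6$ and the remaining two, and then excludes the order-$12$ case by producing one explicit semilinear map in the larger group that does not preserve $L$. You instead work inside ${\rm G}(L)$ acting on $\{L_1,\dots,L_6\}$, and your two halves both check out. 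For the upper bound, your group-theoretic observation is correct: every subgroup of order $20$ in $S_5$ is a Sylow $5$-normalizer, all of these are conjugate, so the transitive action of ${\rm G}(L)\cong S_5$ on six points (transitivity is Proposition \ref{prop1-1}, already available) is forced to be the sharply $3$-transitive one, whence the setwise stabilizer of $\{L_1,L_3,L_6\}$ has order exactly $6$. Note that this \emph{reverses} the paper's logical order — the paper deduces sharp $3$-transitivity (Proposition \ref{prop1-4}) \emph{from} this lemma — but there is no circularity in your version, and you get Proposition \ref{prop1-4} for free. For the lower bound, the "computational heart" you defer is lighter than you expect: the uniqueness of the simplex line meeting $L_1,L_3,L_6$ does not need a coordinate chase in the spirit of Lemma \ref{lemma-q6-2}; it follows from Lemma \ref{lemma-Sij} by the span argument the paper later uses in Proposition \ref{prop1-2} (two distinct transversals of three pairwise disjoint lines would force all three lines into a common hyperplane, or two disjoint lines into a plane). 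What the paper's route buys that yours does not is the explicit pair of generators of ${\rm G}(L_{136})\cap{\rm G}(L)$, which are reused afterwards (e.g.\ to verify that this group also preserves $L_{245}$ in the construction of ${\mathcal X}^{0}_{20}$); what your route buys is a cleaner conceptual proof and an earlier, computation-free proof of sharp $3$-transitivity.
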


\begin{proof}
The group ${\rm G}={\rm G}(L_{136})\cap {\rm G}(L)$ preserves the set $\{\langle x\rangle, \langle y\rangle, \langle x+y\rangle\}$ and 
the set formed by the remaining two points of the line $L_{136}$.
Denote by ${\rm G}'$ the subgroup of ${\rm G}(L_{136})$ formed by all elements preserving the above sets. 
The order of ${\rm G}'$ is $12$ (since every permutation of the points of $L_{136}$ can be uniquely extended to an element of ${\rm G}(L_{136})$)
and ${\rm G}\subset {\rm G}'$.
A direct verification shows that  the projective transformations induced by the linear automorphisms
$$d(1,1,1,\alpha,\alpha^2)p_{(1,2,3)}\;\mbox{ and }\;p_{(2,3)(4,5)}$$
belong to ${\rm G}$.
These transformations generate a subgroup ${\rm G}''\subset {\rm G}$ of order $6$
(because $(1,2,3)$ and (2,3) generate the group $S_3$).
All elements from this subgroup induce even permutations of the points of $L_{136}$
(since they are induced by linear automorphisms).  
Then the remaining $6$ elements of ${\rm G}'$ induce odd permutations.
Since ${\rm G}''\subset {\rm G}\subset {\rm G}'$, the group ${\rm G}$ coincides with ${\rm G}'$ or ${\rm G}''$.

Consider  the semilinear automorphism $u$ associated to the non-identity field automorphism 
and such that
$$u(e_{i})=e_{i}\;\mbox{ if }\; i\le 3\;\mbox{ and }\; u(e_4)=\alpha^2e_5,\;\;u(e_5)=\alpha^2 e_4;$$
it transfers every vector $(x_1,x_2,x_3,x_4,x_5)$ to the vector
$$(x^2_1,\,x^2_2,\,x^2_3,\,\alpha^2x^2_5,\,\alpha^2x^2_4).$$
The corresponding projective transformation is an element of ${\rm G}'\setminus {\rm G}''$ and does not belong to ${\rm G}(L)$.
Therefore, ${\rm G}={\rm G}'$ and we get the claim. 
\end{proof}

Since ${\rm G}(L)$ and ${\rm G}(L_{136})\cap {\rm G}(L)$ are of order $120$ and $6$ (respectively),
the orbit containing $L_{136}$ consists of $20$ elements. We denote this orbit by ${\mathcal X}^{3}_{20}$.
Every simplex line $L'$ belonging to the orbit  ${\mathcal X}^{3}_{20}$ intersects precisely $3$ of the lines $L_i$, i.e. $n(L')=3$.

\begin{prop}\label{prop1-2}
A simplex line $L'$ belongs to ${\mathcal X}^{3}_{20}$ if and only if $n(L')=3$.
\end{prop}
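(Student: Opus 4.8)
The plan is to deduce both implications from the fact that $n$ is invariant under the action of ${\rm G}(L)$. Recall from Lemma \ref{lemma-q} that, for $L'\notin\{L_1,\dots,L_6\}$, $n(L')$ equals the number of the lines $L_i$ meeting $L'$, and that the five points of each $L_i$ all belong to ${\mathcal X}$ (since $L_i$ meets every $H_j\cap C_j$ in a point distinct from $P_j$). Every element of ${\rm G}(L)$ permutes $P_1,\dots,P_5$, hence permutes the hyperplanes $H_1,\dots,H_5$, the sets $H_i\cap C_i$, and (by Proposition \ref{prop1-1}) the lines $L_1,\dots,L_6$; therefore $n$ is constant on each ${\rm G}(L)$-orbit. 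In particular the forward implication is exactly the remark preceding the statement: since $n(L_{136})=3$, every line of the orbit ${\mathcal X}^3_{20}$ has $n=3$.

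For the converse it is enough to show that the ${\rm G}(L)$-invariant set $\{L':n(L')=3\}$, which contains the $20$-element orbit ${\mathcal X}^3_{20}$, contains nothing more. To each such $L'$ I attach the $3$-element set $\tau(L')=\{i:L_i\cap L'\neq\varnothing\}\subset\{1,\dots,6\}$; by the previous paragraph $\tau$ is ${\rm G}(L)$-equivariant. Now ${\rm G}(L)\cong S_5$ acts transitively on $\{L_1,\dots,L_6\}$ (Proposition \ref{prop1-1}) with point stabiliser of order $120/6=20$, and every subgroup of order $20$ in $S_5$ is the normaliser of a Sylow $5$-subgroup, i.e. a Frobenius group $F_{20}$. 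Consequently this action is the (unique up to isomorphism) transitive action of $S_5\cong{\rm PGL}(2,5)$ on the six points of the projective line over ${\mathbb F}_5$, which is sharply $3$-transitive and in particular transitive on $3$-element subsets. Hence every $3$-subset of $\{1,\dots,6\}$ equals $\tau(gL_{136})$ for some $g\in{\rm G}(L)$, and by equivariance it suffices to analyse a single triple.

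The crux is therefore the uniqueness assertion that $L_{136}$ is the \emph{only} simplex line with $n=3$ meeting $L_1,L_3,L_6$. I would establish it by the same reconstruction as in the proofs of Lemmas \ref{lemma-q6-2} and \ref{lemma-nL}. Such a line $L'$ meets ${\mathcal X}$ in three points lying in three distinct $H_a\cap C_a$; choosing $x\in P'_1$ and $y\in P'_2$ with $A_1(x)=A_2(y)=0$ and normalising by $A_1(y)=A_2(x)$, one gets $x+y\in P'_3$, $\alpha^2x+y\in P'_4$ and $\alpha x+y\in P'_5$, using $A_3=A_1+A_2$, $A_4=\alpha^2A_1+A_2$, $A_5=\alpha A_1+A_2$ as in Lemma \ref{lemma-nL}. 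Prescribing that $L'$ meet exactly $L_1,L_3,L_6$ (via their explicit points) fixes which coordinates of these five vectors vanish, and the arithmetic of ${\mathbb F}_4$ — the same elimination that in Lemma \ref{lemma-q6-2} left only the two candidates through $Q_1$ and retained $L_{136}$ — forces a unique solution. Granting this, moving $\tau(L')$ to $\{1,3,6\}$ by a suitable $g\in{\rm G}(L)$ yields $gL'=L_{136}$, so any $L'$ with $n(L')=3$ lies in the ${\rm G}(L)$-orbit of $L_{136}$, that is, in ${\mathcal X}^3_{20}$.

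I expect the finite but delicate ${\mathbb F}_4$-computation of the third paragraph — showing that a chosen triple of the $L_i$ admits a unique transversal with $n=3$ — to be the only real obstacle; the reduction to one triple and the transitivity of ${\rm G}(L)$ on triples are routine once the $S_5\cong{\rm PGL}(2,5)$ structure of the six-point action has been recognised.
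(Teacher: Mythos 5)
Your forward implication (constancy of $n$ on ${\rm G}(L)$-orbits) and your reduction to a single triple are sound, but the converse hinges entirely on a uniqueness claim that you do not prove: that there is at most one simplex line with $n=3$ meeting $L_1,L_3,L_6$. You explicitly write ``Granting this'' and defer it to a ``finite but delicate ${\mathbb F}_4$-computation,'' so as it stands the argument has a genuine gap at its crux. The missing idea is that no computation is needed: if two distinct simplex lines both met the mutually disjoint lines $L_i,L_j,L_k$, their span would be a subspace of dimension at most $4$ containing all three of them (two disjoint transversals each pick up two distinct points of every $L_t$, forcing $L_t$ into the span; if the transversals met in a point the span would be $3$-dimensional and could not even contain two disjoint lines), and this contradicts Lemma \ref{lemma-Sij}, which says the hyperplane $S_{ij}=L_i+L_j$ contains no third $L_k$. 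Hence every triple admits at most one transversal, and since there are exactly $\binom{6}{3}=20$ triples while $|{\mathcal X}^{3}_{20}|=20$ and each member of the orbit meets some triple, a count finishes the proof. This is exactly how the paper argues, and it bypasses both the ${\mathbb F}_4$ elimination and the orbit equivariance machinery.

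Two further remarks. First, your group-theoretic derivation of $3$-homogeneity of the action on $\{L_1,\dots,L_6\}$ (via subgroups of order $20$ in $S_5$ and the ${\rm PGL}(2,5)$ picture) is correct and avoids circularity, but be aware that the paper only establishes sharp $3$-transitivity afterwards, in Proposition \ref{prop1-4}, and does so \emph{using} the lines $L_{ijk}$ whose existence and uniqueness come from Proposition \ref{prop1-2}; citing the paper's later result here would be circular, so your independent argument is the right instinct if you insist on this route. Second, even with $3$-homogeneity in hand you would still need the single-triple uniqueness statement, so the group theory does not reduce the amount of work at the decisive step --- it only relocates it.
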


\begin{proof}
For any mutually distinct $L_i,L_j,L_k$ there is at most one simplex line inter\-secting all these lines.
Indeed, if there are two such lines, then the subspace spanned by them is at most a hyperplane and \item
contains $L_i,L_j,L_k$ which is impossible by Lemma \ref{lemma-Sij}.
Therefore, the statement follows from the fact that there are precisely $20$ triples of mutually distinct $L_i,L_j,L_k$
and ${\mathcal X}^{3}_{20}$ consists of $20$ elements.
\end{proof}

For any triple of natural numbers $i,j,k$ such that $1\le i<j<k\le 6$ the simplex line intersecting the lines $L_i,L_j,L_k$ will  be denoted by $L_{ijk}$.
See Appendix for the list of all such lines.

\begin{prop}\label{prop1-4}
The action of ${\rm G}(L)$ on the set $\{L_1,\dots,L_6\}$ is sharply $3$-transitive. 
\end{prop}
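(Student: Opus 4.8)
The plan is to reduce the statement to an order count. The group ${\rm G}(L)$ has order $120=6\cdot 5\cdot 4$, which is exactly the number of ordered triples of distinct elements of $\{L_1,\dots,L_6\}$. Hence it suffices to prove that the action on $\{L_1,\dots,L_6\}$ is $3$-transitive: once transitivity on ordered triples is known, that orbit has $120$ elements, so by the orbit--stabilizer formula the stabilizer of an ordered triple is trivial, which is precisely sharp $3$-transitivity. I would obtain $3$-transitivity in two stages, first on unordered triples and then by realizing all reorderings.

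For the first stage I would use the lines $L_{ijk}$. By Proposition~\ref{prop1-2} these, for $1\le i<j<k\le 6$, are exactly the elements of the orbit ${\mathcal X}^3_{20}$, and each $L_{ijk}$ is the \emph{unique} simplex line meeting $L_i,L_j,L_k$. Since every element of ${\rm G}(L)$ preserves simplex lines and incidence and permutes $\{L_1,\dots,L_6\}$ (Proposition~\ref{prop1-1}), it sends $L_{ijk}$ to the unique simplex line through the images of $L_i,L_j,L_k$; thus the correspondence $L_{ijk}\leftrightarrow\{L_i,L_j,L_k\}$ is ${\rm G}(L)$-equivariant. As ${\mathcal X}^3_{20}$ is a single orbit, ${\rm G}(L)$ is transitive on the unordered $3$-subsets of $\{L_1,\dots,L_6\}$.

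The main work is the second stage: showing that the setwise stabilizer of a $3$-subset induces the \emph{full} symmetric group on its three members. Through the equivariant correspondence above, an element of ${\rm G}(L)$ fixes $\{L_1,L_3,L_6\}$ setwise if and only if it preserves $L_{136}$, so this stabilizer is ${\rm G}(L)\cap{\rm G}(L_{136})$, of order $6$ by Lemma~\ref{lemma-G}. Restricting to the three points $L_{136}\cap L_1,\,L_{136}\cap L_3,\,L_{136}\cap L_6$ gives a homomorphism into $S_3$, and an isomorphism will follow once injectivity is checked, since both groups have order $6$. Here lies the one delicate point: a projective transformation of the line $L_{136}$ may fix three of its five points without being trivial, because ${\rm P}\Gamma{\rm L}(2,4)\cong S_5$ acts as the full symmetric group on these five points and is \emph{not} sharply $3$-transitive. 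This is overcome by the fact (from the proof of Lemma~\ref{lemma-G}) that ${\rm G}(L)\cap{\rm G}(L_{136})$ is generated by transformations induced by linear automorphisms, hence consists of \emph{even} permutations of the five points of $L_{136}$; an even permutation fixing three of the five points cannot transpose the remaining two, so it is the identity. Thus the kernel is trivial and all of $S_3$ is realized on $\{L_1,L_3,L_6\}$; by transitivity on $3$-subsets the same holds, after conjugation, for every $3$-subset.

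Finally I would assemble the pieces: given two ordered triples of distinct $L_i$, transitivity on unordered triples carries the first set onto the second up to reordering, and the full $S_3$-action on the target set then fixes the order. This yields $3$-transitivity, and the order count above upgrades it to sharp $3$-transitivity.
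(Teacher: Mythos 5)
Your proposal is correct and follows essentially the same route as the paper: transitivity on unordered triples via the ${\rm G}(L)$-equivariant bijection $L_{ijk}\leftrightarrow\{L_i,L_j,L_k\}$ and the orbit ${\mathcal X}^{3}_{20}$, realization of the full $S_3$ on each triple through the order-$6$ stabilizer ${\rm G}(L)\cap{\rm G}(L_{ijk})$ from Lemma \ref{lemma-G}, and the count $120=6\cdot5\cdot4$ to upgrade to sharpness. Your explicit injectivity argument (an even permutation of the five points of $L_{136}$ fixing three of them is the identity) is a small but welcome elaboration of a step the paper leaves implicit.
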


\begin{proof}
Let $i,j,k$ be a triple of natural numbers satisfying $1\le i<j<k\le 6$
and let $X$ be the set formed by the three points which are the intersections of the line $L_{ijk}$ with the lines $L_{i},L_{j},L_{k}$.
Since $L_{ijk}$ belongs to the orbit  ${\mathcal X}^{3}_{20}$, 
the group ${\rm G}(L_{ijk})\cap {\rm G}(L)$ is similar to the group ${\rm G}(L_{136})\cap {\rm G}(L)$
considered in Lemma \ref{lemma-G}; in particular, every permutation on the set $X$
can be uniquely extended to an element of ${\rm G}(L_{ijk})\cap {\rm G}(L)$.
Therefore, every permutation on the set $\{L_{i},L_{j},L_{k}\}$ can be extended to an element of ${\rm G}(L)$.

For any other triple of natural numbers $i',j',k'$ such that  $1\le i'<j'<k'\le 6$
there is an element of ${\rm G}(L)$ transferring $L_{ijk}$ to $L_{i'j'k'}$.
This transformation sends the set $\{L_i,L_j,L_k\}$ to  the set $\{L_{i'},L_{j'},L_{k'}\}$.

So, the group ${\rm G}(L)$ acts transitively on the set of all ordered triples $L_i,L_j,L_k$ with distinct $i,j,k\in \{1,\dots,6\}$
(we do not require that $i<j<k$). There are precisely $120$ such triples and ${\rm G}(L)$ is of order $120$.
Therefore, the action is sharply transitive.
\end{proof}

Now, we describe the set of all simplex lines $L'$ satisfying $n(L')=0$ and show that they form an orbit of the action of ${\rm G}(L)$. 

Let $w$ be the semilinear automorphism associated to the non-identity field automorphism 
which transfers every vector $(x_1,x_2,x_3,x_4,x_5)$ to the vector
$$(x^2_1,\,x^2_2,\,x^2_3,\,x^2_5,\,x^2_4),$$
i.e. $w$ preserves every $e_{i}$, $i\le 3$ and transposes $e_4,e_5$.
The associated projective transformation belongs to ${\rm G(L)}$ and transposes the lines 
$$
L_{136}=\left[\begin{array}{c@{\;}c@{\;}c@{\;}c@{\;}c}
0&1&1&\alpha&\alpha\\ 
1&0&1&\alpha^2&1\\
1&1&0&1&\alpha^2\\
1&\alpha&\alpha^2&0&\alpha\\ 
1&\alpha^2&\alpha&\alpha&0\\ 
\end{array}
\right],\;\;\;
L_{245}=
\left[\begin{array}{c@{\;}c@{\;}c@{\;}c@{\;}c}
0&1&1&\alpha^2&\alpha^2\\ 
1&0&1&1&\alpha\\ 
1&1&0&\alpha&1\\ 
1&\alpha&\alpha^2&0&\alpha^2\\ 
1&\alpha^2&\alpha&\alpha^2&0\\ 
\end{array}
\right].$$
The group ${\rm G}(L_{136})\cap {\rm G}(L)$ is generated by the projective transformations induced by
$$d(1,1,1,\alpha,\alpha^2)p_{(1,2,3)}\;\mbox{ and }\;p_{(2,3)(4,5)}$$
(see the proof of Lemma \ref{lemma-G}) and a direct verification shows that 
the elements of the group ${\rm G}(L_{136})\cap {\rm G}(L)$ preserve the line $L_{245}$, i.e.
$${\rm G}(L_{136})\cap {\rm G}(L)={\rm G}(L_{245})\cap {\rm G}(L).$$
Denote by ${\rm G}(\{L_{136}, L_{245}\})$ the group formed by all elements of ${\rm G}(L)$ 
preserving the set $\{L_{136}, L_{245}\}$.
This group is generated by ${\rm G}(L_{136})\cap {\rm G}(L)$ and the projective transformation induced by $w$, i.e. is of order $12$.

Observe that $\{1,3,6\}\cap \{2,4,5\}=\emptyset$
and each of the lines $L_{1},\dots, L_{6}$ intersects precisely one of the lines $L_{136}, L_{245}$
in the point corresponding to an $i$-row with $i\le 3$.
The $4$-th point $\langle 1,\alpha,\alpha^2,0,\alpha \rangle$ of the line $L_{136}$ and $5$-th point $\langle 1,\alpha^2,\alpha,\alpha^2,0\rangle$ of the line $L_{245}$
are connected by the simplex line
$$L'=
\left[\begin{array}{c@{\;}c@{\;}c@{\;}c@{\;}c}
0&1&1&\alpha^2&\alpha\\ 
1&0&1&1&1\\ 
1&1&0&\alpha&\alpha^2\\ 
1&\alpha&\alpha^2&0&\alpha\\ 
1&\alpha^2&\alpha&\alpha^2&0\\ 
\end{array}
\right].$$
Similarly,  the $5$-th point $\langle 1,\alpha^2,\alpha,\alpha,0 \rangle$ on $L_{136}$ and the $4$-th point $\langle 1,\alpha,\alpha^2,0,\alpha^2\rangle$ on $L_{245}$
are joined by the simplex line
$$L''=
\left[\begin{array}{c@{\;}c@{\;}c@{\;}c@{\;}c}
0&1&1&\alpha&\alpha^2\\ 
1&0&1&\alpha^2&\alpha\\ 
1&1&0&1&1\\ 
1&\alpha&\alpha^2&0&\alpha^2\\ 
1&\alpha^2&\alpha&\alpha&0\\ 
\end{array}
\right];$$
see the figure below.  
These lines are disjoint with $L$ and all $L_i$ (see the list of the lines $L_i$ after the proof of Lemma \ref{lemma-q6-2}), i.e.
$n(L')=n(L'')=0$.
Note that the projective transformation indued by $p_{(2,3)(4,5)}$ transposes $L',L''$
and these lines belong to the same orbit of the action of ${\rm G}(L)$.
\begin{center}
\begin{tikzpicture}[scale=0.5]

\draw[fill=black] (-3,0) circle (3pt);
\draw[fill=black] (3,0) circle (3pt);

\draw[fill=black] (-3,-2) circle (3pt);
\draw[fill=black] (3,-2) circle (3pt);

\draw[fill=black] (-3,-4) circle (3pt);
\draw[fill=black] (3,-4) circle (3pt);

\draw[fill=black] (-3,-5.5) circle (3pt);
\draw[fill=black] (3,-5.5) circle (3pt);

\draw[fill=black] (-3,-7.5) circle (3pt);
\draw[fill=black] (3,-7.5) circle (3pt);

\draw [thick] (-3,0) -- (-3,-7.5); 
\draw [thick] (3,0) -- (3,-7.5); 

\draw [thick] (-3,0) -- (-7,0); 
\draw [thick] (3,0) -- (7,0); 
\draw [thick] (-3,-2) -- (-7,-2); 
\draw [thick] (3,-2) -- (7,-2); 
\draw [thick] (-3,-4) -- (-7,-4); 
\draw [thick] (3,-4) -- (7,-4); 

\draw [thick] (-3,-5.5) -- (3,-7.5); 
\draw[fill=white, white] (0,-6.5) circle (6pt);
\draw [thick] (3,-5.5) -- (-3,-7.5); 

\node at (-2,-5.25) {$L'$};
\node at (2,-5.25) {$L''$};

\node at (-6.75,0.45) {$L_{1}$};
\node at (-6.75,-1.55) {$L_{3}$};
\node at (-6.75,-3.5) {$L_{6}$};
\node at (6.75,0.45) {$L_{4}$};
\node at (6.75,-1.55) {$L_{5}$};
\node at (6.75,-3.5) {$L_{2}$};

\node at (-2.15,-1) {$L_{136}$};
\node at (2.15,-1) {$L_{245}$};

\end{tikzpicture}
\end{center}

Since  ${\rm G}(L)$ permutes the set of all $L_{i}$,
every element of this group sending $L_{136}$ to $L_{ijk}$ sends also $L_{245}$ to $L_{i'j'k'}$ such that  the indices $i,j,k,i',j',k'$ are mutually distinct;
also, it maps $L'$ and $L''$ to simplex lines  ${\tilde L}'$ and ${\tilde L}''$ satisfying $n({\tilde L}')=n({\tilde L}'')=0$. 
There are precisely $10$ pairs of lines $L_{ijk},L_{i'j'k'}$ whose indices are mutually distinct
and ${\rm G}(L)$ acts transitively on the set of all these pairs
(the latter follows from the fact that ${\rm G}(L)$ and ${\rm G}(\{L_{136}, L_{245}\})$ are of order $120$ and $12$, respectively).
Every such pair $L_{ijk},L_{i'j'k'}$ defines a pair of lines ${\tilde L}',{\tilde L}''$  satisfying $n({\tilde L}')=n({\tilde L}'')=0$
and there is an element of ${\rm G}(L)$ transferring ${\tilde L}'$ to ${\tilde L}''$
(it was noted above that $L'$ and $L''$ belong to the same orbit).

So, there is an orbit of the action of ${\rm G}(L)$ consisting of $20$ simplex lines non-adjacent to  $L, L_1,\dots,L_6$.
We denote this orbit by ${\mathcal X}^{0}_{20}$ (see Appendix for the list of all elements of this orbit). 

\begin{prop}\label{prop1-5}
Every simplex line non-adjacent to  $L, L_1,\dots,L_6$ belongs to the orbit ${\mathcal X}^{0}_{20}$.
\end{prop}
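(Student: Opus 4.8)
The plan is to establish Proposition \ref{prop1-5} by a counting argument that partitions all simplex lines at distance $2$ from $L$ according to the invariant $n(L')$, and then shows that the $n(L')=0$ lines are exactly exhausted by the orbit ${\mathcal X}^{0}_{20}$ already constructed. Recall from Lemma \ref{lemma-nL} that for every simplex line $L'$ distinct from $L_1,\dots,L_6$ we have $n(L')\in\{0,1,3\}$, and a simplex line is non-adjacent to $L, L_1,\dots,L_6$ precisely when $n(L')=0$. So the statement to prove is equivalent to the claim that there are exactly $20$ simplex lines with $n(L')=0$, matching the size of the orbit ${\mathcal X}^{0}_{20}$.

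First I would count the total number of simplex lines at distance $2$ from $L$. By Theorem \ref{theorem1} (or independently via Proposition \ref{prop-element}: the graph has $162$ vertices and degree $25$) there are $162$ simplex lines in all, of which $1$ is $L$ itself, $6$ are the lines $L_1,\dots,L_6$ at distance $3$, and $25$ are adjacent to $L$. Hence the number of lines at distance $2$ equals $162-1-6-25=130$. These $130$ lines are exactly the simplex lines distinct from $L,L_1,\dots,L_6$ and non-adjacent to $L$, so each has $n(L')\in\{0,1,3\}$, giving a partition $130=N_0+N_1+N_3$ where $N_i$ counts lines with $n(L')=i$. By Proposition \ref{prop1-2} we already know $N_3=20$, and the orbit ${\mathcal X}^{0}_{20}$ provides $20$ lines with $n(L')=0$, so $N_0\ge 20$. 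It remains to pin down $N_1$ and $N_0$ exactly.

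The cleanest route is a double count of incidences between the lines $L_1,\dots,L_6$ and the lines at distance $2$. By Lemma \ref{lemma-q}, each point of the set ${\mathcal X}$ (the $30$ simplex points lying in some $H_i\cap C_i$ and distinct from $P_i$) lies on a unique $L_j$, and for $L'\ne L_1,\dots,L_6$ the value $n(L')$ equals the number of the lines $L_i$ that $L'$ meets. Each intersection point of an $L'$ with some $L_i$ is a simplex point on $L_i$ distinct from the five $Q$-type points accounted for internally; I would count, over all $i$, the simplex lines through each point of $L_i$ other than $L_i$ itself, using Proposition \ref{prop-element}(1) (every simplex point lies on $(q-1)!-1=5$ simplex lines besides a given one) together with the adjacency structure. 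Summing $\sum_{L'} n(L') = \sum_i (\text{simplex lines at distance }2\text{ meeting }L_i)$ and comparing with $3N_3+1\cdot N_1+0\cdot N_0 = 60+N_1$ yields one linear relation; combined with $N_0+N_1+N_3=130$ and $N_3=20$ this forces the exact values (the intended conclusion being $N_1=90$ and $N_0=20$, consistent with Theorem \ref{theorem1}(2)).

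Once $N_0=20$ is established, the orbit ${\mathcal X}^{0}_{20}$ already exhibits $20$ distinct simplex lines with $n(L')=0$, so they account for \emph{all} lines with that invariant, which is precisely the assertion that every simplex line non-adjacent to $L,L_1,\dots,L_6$ lies in ${\mathcal X}^{0}_{20}$. The main obstacle I anticipate is getting the incidence count between the $L_i$ and the distance-$2$ lines correct: one must verify that the $20$ triples $L_{ijk}$ and the remaining adjacencies are counted without overlap, and in particular that no line at distance $2$ meets an $L_i$ in one of the distinguished $Q$-points (which belong to ${\mathcal X}$ and would be double-counted). If the bookkeeping proves delicate, a safe alternative is to invoke the orbit–stabilizer theorem directly: show that the stabilizer in ${\rm G}(L)$ of any $n(L')=0$ line has order $6$ (by an argument paralleling Lemma \ref{lemma-G}), forcing its orbit to have size $120/6=20$; since ${\mathcal X}^{0}_{20}$ is such an orbit and all $n(L')=0$ lines would then lie in orbits of size $20$, one checks there is only one such orbit, giving the result without a global incidence count.
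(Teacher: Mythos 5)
Your proposal is correct and follows essentially the same route as the paper: both arguments reduce the claim to showing there are exactly $90$ lines with $n(L')=1$ and hence exactly $20$ with $n(L')=0$, using the degree $25$ of each $L_i$, the fact that the $10$ lines $L_{ijk}$ through $L_i$ exhaust the $n=3$ case (Proposition~\ref{prop1-2}), and the global count $162=1+25+20+20+90+6$. The paper simply organizes the incidence count per $L_i$ (each $L_i$ meets $10$ lines $L_{ijk}$ and $15$ lines with $n=1$, giving $15\cdot 6=90$) rather than as your summed double count $\sum_{L'}n(L')=6\cdot 25=60+N_1$; these are the same computation, and your worry about double-counting at the points $Q_j$ is unfounded since each line meeting $L_i$ contributes once to the $i$-th term regardless of the intersection point.
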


\begin{proof}
We have to show that each simplex line non-adjacent to $L, L_1,\dots,L_6$ belongs to the orbit ${\mathcal X}^{0}_{20}$.  
By Proposition \ref{prop-element}, the line $L_1$ is adjacent to precisely $25$ simplex lines. 
There are $10$ lines $L_{1jk}$ with $j,k\in \{2,\dots,6\}$ and $j<k$.
For the remaining $15$ simplex lines $S$ adjacent to $L_1$ we have $n(S)=1$ by Lemma \ref{lemma-nL}
and each of these lines is not adjacent to other $L_i$.
The same holds for each $L_i$.
Therefore, there are precisely $90=15\cdot 6$ simplex lines $S$ satisfying $n(S)=1$.
The set of all such lines will be denoted by ${\mathcal X}^{1}_{90}$.
The number of all simplex lines is
$$162=1+25+20+20+90+6,$$
where $1$ corresponds to the line $L$, $25$ is the number of simplex lines adjacent to $L$, $|{\mathcal X}^{3}_{20}|=|{\mathcal X}^{0}_{20}|=20$,
$|{\mathcal X}^{1}_{90}|=90$ and $6$ corresponds to the number of  all $L_i$.
This gives the claim.
\end{proof}

The lines from ${\mathcal X}^{0}_{20}$ are mutually disjoint (see the list of these lines given in Appendix).
Therefore, the set $$\{L,L_{1},\dots, L_{6}\}\cup {\mathcal X}^{0}_{20}$$ consists of $27$ mutually disjoint lines.
Since each line contains precisely $5$ points, these lines cover all $5\cdot 27=135$ simplex points
and we get a spread for the set of simplex points.

Theorem \ref{theorem1} and the statement (1) of Theorem \ref{theorem2} are proved (Propositions \ref{prop1-conn} -- \ref{prop1-5}).
To complete the proof of Theorem \ref{theorem2}
we describe the action of ${\rm G}(L)$ on
the set ${\mathcal X}^{1}_{90}$ consisting of all simplex lines $S$ satisfying $n(S)=1$
(this set is introduced in the proof of Proposition \ref{prop1-5}) and
the set ${\mathcal A}$ formed by all simplex lines adjacent to $L$.

\begin{prop}\label{prop1-6}
The set ${\mathcal A}$ is the union of two orbits consisting of $10$ and $15$ simplex lines. 
\end{prop}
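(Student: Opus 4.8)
The plan is to exploit the transitive action of $\mathrm{G}(L)\cong S_5$ on the five points $P_1,\dots,P_5$ of $L$ and to reduce the assertion to a single orbit computation for the point stabilizer $S_4$ on a five-element fibre. First I would record the elementary combinatorics of $\mathcal A$. By Proposition \ref{prop-element} we have $|\mathcal A|=25$. Every $S\in\mathcal A$ satisfies $\dim(S\cap L)=1$, so $S$ meets $L$ in a single point, necessarily one of $P_1,\dots,P_5$; conversely, every simplex line through some $P_i$ other than $L$ shares the point $P_i$ with $L$ and is therefore adjacent to it. Since $P_i$ lies on precisely $(q-1)!=6$ simplex lines, it lies on exactly $5$ members of $\mathcal A$. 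Thus $\mathcal A$ is the disjoint union of the five \emph{bundles} $B_i$, where $B_i$ consists of the lines of $\mathcal A$ through $P_i$, each of size $5$.

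Next comes the reduction. Since $\mathrm{G}(L)$ induces the full symmetric group on $\{P_1,\dots,P_5\}$, it permutes the bundles $B_i$ transitively, and the stabilizer $H=\mathrm{G}(L)_{P_1}\cong S_4$ preserves $B_1$. A standard fibre argument then shows that the $\mathrm{G}(L)$-orbits on $\mathcal A$ correspond bijectively to the $H$-orbits on $B_1$, an $H$-orbit of size $a$ giving rise to a $\mathrm{G}(L)$-orbit of size $5a$ (any $g$ carrying one line of $B_1$ to another fixes $P_1$, hence lies in $H$). So it suffices to prove that $H\cong S_4$ acts on the five lines of $B_1$ with orbits of sizes $2$ and $3$, which will yield the two $\mathrm{G}(L)$-orbits of sizes $10$ and $15$.

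For this I would make $B_1$ explicit. Writing $x=(0,1,1,1,1)\in P_1$, every simplex line through $P_1$ has a unique point in $C_2$ of the form $y=(1,0,b_3,b_4,b_5)$, and a short check using Proposition \ref{prop-ad} shows that such a line is simplex precisely when $(b_3,b_4,b_5)$ is a permutation of $(1,\alpha,\alpha^2)$; the identity permutation yields $L$ itself, so $B_1$ is indexed by the five non-identity permutations, split into three ``transposition-type'' and two ``$3$-cycle-type'' triples. I would then exhibit $H$ through monomial automorphisms fixing $P_1$, for instance $d(\alpha^2,1,1,1,1)p_{(3,4,5)}$, $p_{(2,5)(3,4)}$, and the Frobenius map transposing $e_4,e_5$, whose induced permutations $(3\,4\,5)$, $(2\,5)(3\,4)$, $(4\,5)$ of $\{P_2,\dots,P_5\}$ generate the full $S_4$. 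Computing their action on the index $(b_3,b_4,b_5)$ (using $1+\alpha+\alpha^2=0$ and that squaring transposes $\alpha,\alpha^2$) shows that all three generators preserve the partition into the three transposition-type lines and the two $3$-cycle-type lines, while the subgroup $\langle d(\alpha^2,1,1,1,1)p_{(3,4,5)},\,w\rangle$ already acts transitively on each block. Hence $H$ has exactly two orbits on $B_1$, of sizes $3$ and $2$, and multiplying by $5$ gives the claimed orbits of sizes $15$ and $10$; since these sizes differ, the orbits are genuinely distinct.

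The main obstacle is this final step: verifying that the induced action on the fibre has orbit type $3+2$ rather than, say, $1+4$ or a single transitive piece. This rests entirely on the explicit action of the generators on the permutations $(b_3,b_4,b_5)$, the one genuinely computational point, where the arithmetic of $\mathbb F_4$ is exactly what makes the transposition/$3$-cycle dichotomy $H$-invariant. Everything else is bookkeeping supplied by the fibre correspondence and the count $|\mathcal A|=25$.
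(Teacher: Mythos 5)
Your proposal is correct: the reduction to the stabilizer $H\cong S_4$ of $P_1$ acting on the five-line fibre $B_1$, the identification of $B_1$ with the five non-identity permutations of $(1,\alpha,\alpha^2)$, and the verification that the exhibited generators preserve the transposition/$3$-cycle dichotomy while acting transitively on each block all check out (for instance, $d(\alpha^2,1,1,1,1)p_{(3,4,5)}$ sends the index $(b_3,b_4,b_5)$ to $(\alpha b_5,\alpha b_3,\alpha b_4)$, cycling the three odd indices and fixing both even ones, while the Frobenius element transposing $e_4,e_5$ swaps the two $3$-cycle indices). The overall skeleton is the same as the paper's: both arguments fibre $\mathcal A$ over the five points of $L$, use transitivity of ${\rm G}(L)$ on those points, and then split the five lines through one fixed point into a $2$-set and a $3$-set permuted by the stabilizer. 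The genuine difference lies in the separating invariant. The paper works through $P_5$ and distinguishes the two blocks by whether a line of $\mathcal A$ meets some $L_{ijk}\in{\mathcal X}^{3}_{20}$; this invariant is automatically ${\rm G}(L)$-invariant because ${\mathcal X}^{3}_{20}$ was already shown to be an orbit, but it leans on the explicit lists of the twenty lines $L_{ijk}$. You instead use the cycle type of the permutation indexing the fibre, which is not a priori ${\rm G}(L)$-invariant and so must be checked against generators of the full stabilizer (which you do), but in exchange your argument is self-contained within the fibre and needs none of the ${\mathcal X}^{3}_{20}$ apparatus. What you lose is the extra structural information the paper's version delivers for free, namely that the orbit of size $10$ consists precisely of the elements of $\mathcal A$ meeting ${\mathcal X}^{3}_{20}$, which is how the paper actually labels the two orbits.
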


\begin{proof}
Consider the simplex lines 
$$T_1=\left[\begin{array}{c@{\;}c@{\;}c@{\;}c@{\;}c}
0&1&1&1&\alpha\\ 
1&0&1&\alpha&1\\ 
1&1&0&\alpha^2&\alpha^2\\ 
1&\alpha&\alpha^2&0&\alpha\\ 
1&\alpha^2&\alpha&1&0\\ 
\end{array}
\right],\;\;\;
T_2=\left[\begin{array}{c@{\;}c@{\;}c@{\;}c@{\;}c}
0&1&1&1&\alpha^2\\ 
1&0&1&\alpha&\alpha\\ 
1&1&0&\alpha^2&1\\ 
1&\alpha&\alpha^2&0&\alpha^2\\ 
1&\alpha^2&\alpha&1&0\\  
\end{array}
\right]$$
joining the point $P_5=\langle 1,\alpha^2,\alpha,1,0\rangle$ on the line $L$  
with the points  $\langle 0,1,1,1,\alpha \rangle$ and $\langle 0,1,1,1,\alpha^2\rangle$ on the lines $L_{345}$ and $L_{126}$, respectively.
The projective transformation induced by the semilinear automorphism sending every vector $(x_1,x_2,x_3,x_4,x_5)$ to the vector
$$(x^2_1,x^2_3,x^2_2,x^2_4,x^2_5)$$
preserves the line $L$ and transposes the lines $T_1$ and $T_2$. 
The remaining three elements of ${\mathcal A}$ passing through  $P_5$ are the following 
$$S_1=\left[\begin{array}{c@{\;}c@{\;}c@{\;}c@{\;}c}
0&1&\alpha&\alpha^2&1\\ 
1&0&\alpha^2&\alpha^2&\alpha^2\\ 
1&\alpha&0&\alpha&1\\ 
1&1&1&0&\alpha\\ 
1&\alpha^2&\alpha&1&0\\ 
\end{array}
\right],\;
S_2=\left[\begin{array}{c@{\;}c@{\;}c@{\;}c@{\;}c}
0&1&\alpha&\alpha^2&\alpha\\ 
1&0&\alpha^2&\alpha^2&1\\ 
1&\alpha&0&\alpha&\alpha\\ 
1&1&1&0&\alpha^2\\ 
1&\alpha^2&\alpha&1&0\\ 
\end{array}
\right],\;
S_3=\left[\begin{array}{c@{\;}c@{\;}c@{\;}c@{\;}c}
0&1&\alpha&\alpha^2&\alpha^2\\ 
1&0&\alpha^2&\alpha^2&\alpha\\ 
1&\alpha&0&\alpha&\alpha^2\\ 
1&1&1&0&1\\ 
1&\alpha^2&\alpha&1&0\\ 
\end{array}
\right].$$
Each of these lines does not intersect any $L_{ijk}$.
The element of ${\rm G}(L)$ associated to the linear automorphism $d(\alpha,1,1,1,1)p_{(2,3,4)}$ induces the permutation $(S_1,S_2,S_3)$
on the set of these lines.
Since the group ${\rm G}(L)$ acts transitively on the points of $L$,
through every point of $L$ there are two simplex lines  intersecting some $L_{ijk}$ and three lines disjoint with all $L_{ijk}$.
The set ${\mathcal A}$ is the union of two orbits:
all elements of ${\mathcal A}$ intersecting  some $L_{ijk}$ form an orbit of size $10$
and all elements of ${\mathcal A}$ disjoint with all  $L_{ijk}$ form an orbit of size $15$.
\end{proof}

\begin{prop}\label{prop1-7}
The set ${\mathcal X}^{1}_{90}$ is the union of two orbits consisting of $30$ and $60$ simplex lines. 
\end{prop}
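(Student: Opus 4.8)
The statement claims that the 90-element set $\mathcal{X}^{1}_{90}$ of simplex lines $S$ with $n(S)=1$ splits into two ${\rm G}(L)$-orbits of sizes $30$ and $60$. Let me think about how to prove this.

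We know $|{\rm G}(L)| = 120$. By the orbit-stabilizer theorem, an orbit of size $30$ corresponds to a stabilizer of order $4$, and an orbit of size $60$ to a stabilizer of order $2$. To get orbit sizes summing to $90$ with these divisibility constraints, $30+60$ is forced (other options like $90$ alone would need a stabilizer of order $\frac{120}{90}$, which isn't an integer; two orbits $30+60$ is the natural answer, matching the preceding Prop 16 where $\mathcal{A}$ splits as $10+15$).

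The key structural handle: each $S \in \mathcal{X}^{1}_{90}$ has $n(S)=1$, meaning $S$ intersects exactly one of the lines $L_1,\dots,L_6$ (by Lemma 9 combined with the remark after Lemma 8). In the proof of Proposition 17 it was established that $\mathcal{X}^{1}_{90}$ consists precisely of the $15$ simplex lines adjacent to each $L_i$ that are not among the $L_{ijk}$. So there is a natural map $\mathcal{X}^{1}_{90} \to \{L_1,\dots,L_6\}$ recording the unique $L_i$ that $S$ meets, giving six blocks of $15$ lines each, permuted transitively by ${\rm G}(L)$ (since ${\rm G}(L)$ acts transitively on $\{L_1,\dots,L_6\}$).

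**Proof plan for Proposition 18:**

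I would follow the template of Proposition 16. First I would exhibit an explicit sublist of $\mathcal{X}^{1}_{90}$ by fixing one line, say $L_1$, and listing representatives of the $15$ simplex lines $S$ with $S \cap L_1 \neq \emptyset$, $S \notin \{L_{1jk}\}$. Since ${\rm G}(L)$ permutes $\{L_1,\dots,L_6\}$ transitively with a stabilizer ${\rm G}(L)_{L_1}$ of order $\frac{120}{6}=20$, the analysis reduces to understanding how ${\rm G}(L)_{L_1}$ acts on these $15$ lines. The group ${\rm G}(L)_{L_1}$ (order $20$) acts on the $15$ lines through $L_1$; a single ${\rm G}(L)$-orbit within $\mathcal{X}^{1}_{90}$ meets this block in a single ${\rm G}(L)_{L_1}$-orbit. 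An overall orbit of size $30$ meets each block in $\frac{30}{6}=5$ lines, and an orbit of size $60$ meets each block in $\frac{60}{6}=10$ lines. Thus the whole problem reduces to showing that ${\rm G}(L)_{L_1}$ splits the $15$ lines through $L_1$ into orbits of sizes $5$ and $10$.

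Next I would pin down this local splitting by finding an invariant that distinguishes the two types. The natural candidate is a finer incidence condition, for instance whether $S$ is disjoint from or meets the lines of $\mathcal{X}^{0}_{20}$ or the various $L_{ijk}$, mirroring how Proposition 16 separated $\mathcal{A}$ by whether a line meets some $L_{ijk}$. I would exhibit explicit generators of ${\rm G}(L)_{L_1}$ as monomial (semilinear) automorphisms fixing $L_1$ setwise — one order-$5$ element and supporting involutions — and verify by direct computation (analogous to the constructions in Lemmas 5 and 6 and Proposition 16) that they produce one orbit of length $5$ and one of length $10$ on the $15$ candidate lines. Invoking transitivity of ${\rm G}(L)$ on $\{L_1,\dots,L_6\}$ then assembles these into the global orbits of size $30$ and $60$.

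The main obstacle will be the bookkeeping of the explicit action: one must correctly identify the $15$ lines through $L_1$ that lie in $\mathcal{X}^{1}_{90}$, write down enough elements of ${\rm G}(L)_{L_1}$ to generate it, and trace the induced permutations carefully enough to confirm the $5+10$ split rather than, say, three orbits of size $5$ or a single orbit of $15$. This is essentially the same explicit-matrix verification used throughout Section 4, so while it is routine in spirit, it is computationally delicate; the Appendix's list of the $L_{ijk}$ and of $\mathcal{X}^{0}_{20}$ should make the incidence checks mechanical, and the count $15 = 5 + 10$ with stabilizer orders $4$ and $2$ provides a consistency check against the orbit-stabilizer theorem.
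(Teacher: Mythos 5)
Your plan is essentially the paper's own argument: the paper likewise reduces to the stabilizer ${\rm G}(L)\cap{\rm G}(L_1)$, uses ``meets some $L_{ijk}$'' as the separating invariant (two of the three $\mathcal{X}^{1}_{90}$-lines through each point of $L_1$ meet some $L_{ijk}$, one does not), and exhibits explicit monomial semilinear automorphisms — one swapping the two lines of the first kind through $Q_1$, and two whose composition is a $5$-cycle on the points of $L_1$ — to get the $10+5$ local split and hence the $60+30$ global one. The only thing your proposal defers is this concrete verification, which is exactly the content of the paper's proof.
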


\begin{proof}
Consider the five simplex lines intersecting $L_1$ in the point $Q_1=\langle 0,1,1,\alpha,\alpha,\rangle$. 
Two of them are the lines $L_{136}, L_{125}$ and 
the remaining three 
$$N_{1}=\left[\begin{array}{c@{\;}c@{\;}c@{\;}c@{\;}c}
0&1&1&\alpha&\alpha\\ 
1&0&1&1&\alpha^2\\ 
1&1&0&\alpha^2&1\\ 
1&\alpha^2&\alpha&0&\alpha\\ 
1&\alpha&\alpha^2&\alpha&0\\ 
\end{array}
\right],\;\;
N_{2}=
\left[\begin{array}{c@{\;}c@{\;}c@{\;}c@{\;}c}
0&1&1&\alpha&\alpha\\ 
1&0&\alpha^2&\alpha^2&\alpha\\ 
1&\alpha^2&0&\alpha&\alpha^2\\ 
1&\alpha&1&0&1\\ 
1&1&\alpha&1&0\\ 
\end{array}
\right],\;\;
N_{3}=
\left[\begin{array}{c@{\;}c@{\;}c@{\;}c@{\;}c}
0&1&1&\alpha&\alpha\\ 
1&0&\alpha&1&\alpha\\
1&\alpha&0&\alpha&1\\
1&\alpha^2&1&0&\alpha^2\\
1&1&\alpha^2&\alpha^2&0\\ 
\end{array}
\right]$$
belong to ${\mathcal X}^{1}_{90}$.
The lines $N_1$ and $N_2$ intersects the lines $L_{236},L_{356}$ and $L_{235},L_{256}$, respectively. 
The line $N_3$ is disjoint with all $L_{ijk}$.
The transformation $f\in {\rm G}(L)$ induced by the semilinear automorphism sending every vector $(x_1,x_2,x_3,x_4,x_5)$ to the vector
$$(x^2_1,x^2_4,x^2_5,x^2_3,x^2_2)$$
preserves the line $L_1$ and transposes the lines $N_1,N_2$.
The transformation $g\in{\rm G}(L)$ induced by the semilinear automorphism sending every vector $(x_1,x_2,x_3,x_4,x_5)$ to the vector
$$(\alpha^2 x^2_5,\alpha^2 x^2_2,x^2_1,\alpha x^2_3,x^2_4)$$
preserves the line $L_1$.
So, $f$ and $g$ belong to ${\rm G}(L)\cap {\rm G}(L_1)$.
Recall that 
$$L_1=\left[\begin{array}{c@{\;}c@{\;}c@{\;}c@{\;}c}
0&1&1&\alpha&\alpha\\
1&0&\alpha&\alpha&1\\
1&\alpha&0&1&\alpha\\
1&1&\alpha^2&0&\alpha^2\\
1&\alpha^2&1&\alpha^2&0\\
\end{array}
\right]$$
and for every $i\in \{2,\dots,5\}$ we denote by $Q'_{i}$
the point on the line $L_1$ with zero $i$-coordinate.
The transformations $f$ and $g$ induce the permutations 
$$(Q'_2,Q'_5,Q'_3, Q'_4)\;\mbox{ and }\; (Q_1,Q'_3,Q'_4,Q'_5)$$
(respectively) on the points of $L_1$.
The composition $fg$ gives the permutation 
$$(Q_1,Q'_4,Q'_3,Q'_2,Q'_5).$$
Therefore, the group ${\rm G}(L)\cap {\rm G}(L_1)$ acts transitively on the points of $L_{1}$. 
Since ${\rm G}(L)$ acts transitively on the set $\{L_1,\dots,L_6\}$, 
the set ${\mathcal X}^{1}_{90}$ is the union of two orbits:
all elements of ${\mathcal X}^{1}_{90}$ intersecting some $L_{ijk}$ form an  orbit of size $60$ and 
the remaining elements of ${\mathcal X}^{1}_{90}$ form an orbit of size $30$.
\end{proof}

\section{Appendix}
The orbit ${\mathcal X}^{3}_{20}$ consists of the following $20$ simplex lines:

$$L_{123}=\left[\begin{array}{c@{\;}c@{\;}c@{\;}c@{\;}c}
\bm{0}&\bm{1}&\bm{\alpha^2}&\bm{\alpha^2}&\bm{\alpha^2}\\ 
\bm{1}&\bm{0}&\bm{\alpha^2}&\bm{1}&\bm{\alpha}\\ 
1&1&0&\alpha&1\\
1&\alpha&\alpha&0&\alpha^2\\
1&\alpha^2&1&\alpha^2&0\\
\end{array}
\right]\!,\,
L_{124}=
\left[\begin{array}{c@{\;}c@{\;}c@{\;}c@{\;}c}
0&1&\alpha&1&\alpha\\
1&0&\alpha&\alpha&1\\
\bm{1}&\bm{1}&\bm{0}&\bm{\alpha^2}&\bm{\alpha^2}\\ 
1&\alpha&1&0&\alpha\\
\bm{1}&\bm{\alpha^2}&\bm{\alpha^2}&\bm{1}&\bm{0}\\ 
\end{array}
\right]\!,\,
L_{125}=
\left[\begin{array}{c@{\;}c@{\;}c@{\;}c@{\;}c}
0&1&1&\alpha&\alpha\\
\bm{1}&\bm{0}&\bm{\alpha^2}&\bm{\alpha}&\bm{\alpha^2}\\ 
\bm{1}&\bm{\alpha^2}&\bm{0}&\bm{\alpha^2}&\bm{\alpha}\\ 
1&1&\alpha&0&1\\
1&\alpha&1&1&0\\
\end{array}
\right]\!,\,$$
$$L_{126}=\left[\begin{array}{c@{\;}c@{\;}c@{\;}c@{\;}c}
\bm{0}&\bm{1}&\bm{1}&\bm{1}&\bm{\alpha^2}\\ 
1&0&\alpha&\alpha^2&\alpha^2\\
1&\alpha&0&1&\alpha\\
1&\alpha^2&1&0&1\\
\bm{1}&\bm{1}&\bm{\alpha^2}&\bm{\alpha}&\bm{0}\\ 
\end{array}
\right]\!,\,
L_{134}=
\left[\begin{array}{c@{\;}c@{\;}c@{\;}c@{\;}c}
0&1&\alpha&\alpha&1\\
\bm{1}&\bm{0}&\bm{1}&\bm{\alpha}&\bm{\alpha}\\ 
1&\alpha^2&0&\alpha^2&1\\ 
1&1&\alpha^2&0&\alpha^2\\
\bm{1}&\bm{\alpha}&\bm{\alpha}&\bm{1}&\bm{0}\\ 
\end{array}
\right]\!,\,
L_{135}=
\left[\begin{array}{c@{\;}c@{\;}c@{\;}c@{\;}c}
\bm{0}&\bm{1}&\bm{1}&\bm{\alpha^2}&\bm{1}\\ 
1&0&\alpha&\alpha&1\\ 
1&\alpha&0&\alpha^2&\alpha^2\\ 
\bm{1}&\bm{\alpha^2}&\bm{1}&\bm{0}&\bm{\alpha}\\ 
1&1&\alpha^2&1&0\\
\end{array}
\right]\!,\,$$
$$L_{136}=\left[\begin{array}{c@{\;}c@{\;}c@{\;}c@{\;}c}
0&1&1&\alpha&\alpha\\ 
1&0&1&\alpha^2&1\\
1&1&0&1&\alpha^2\\
\bm{1}&\bm{\alpha}&\bm{\alpha^2}&\bm{0}&\bm{\alpha}\\ 
\bm{1}&\bm{\alpha^2}&\bm{\alpha}&\bm{\alpha}&\bm{0}\\ 
\end{array}
\right]\!,\,
L_{145}=
\left[\begin{array}{c@{\;}c@{\;}c@{\;}c@{\;}c}
0&1&\alpha^2&1&\alpha^2\\
\bm{1}&\bm{0}&\bm{1}&\bm{\alpha^2}&\bm{\alpha^2}\\ 
1&\alpha&0&1&\alpha\\ 
\bm{1}&\bm{\alpha^2}&\bm{\alpha^2}&\bm{0}&\bm{1}\\ 
1&1&\alpha&\alpha&0\\ 
\end{array}
\right],
L_{146}=
\left[\begin{array}{c@{\;}c@{\;}c@{\;}c@{\;}c}
0&1&\alpha^2&\alpha^2&1\\ 
1&0&\alpha^2&1&\alpha^2\\ 
\bm{1}&\bm{1}&\bm{0}&\bm{\alpha}&\bm{\alpha}\\ 
\bm{1}&\bm{\alpha}&\bm{\alpha}&\bm{0}&\bm{1}\\ 
1&\alpha^2&1&\alpha^2&0\\
\end{array}
\right]\!,\,$$
$$L_{156}=\left[\begin{array}{c@{\;}c@{\;}c@{\;}c@{\;}c}
\bm{0}&\bm{1}&\bm{\alpha}&\bm{1}&\bm{1}\\ 
1&0&1&1&\alpha\\
\bm{1}&\bm{\alpha^2}&\bm{0}&\bm{\alpha}&\bm{1}\\ 
1&1&\alpha^2&0&\alpha^2\\ 
1&\alpha&\alpha&\alpha^2&0\\ 
\end{array}
\right]\!,\,
L_{234}=
\left[\begin{array}{c@{\;}c@{\;}c@{\;}c@{\;}c}
\bm{0}&\bm{1}&\bm{\alpha^2}&\bm{1}&\bm{1}\\ 
1&0&1&\alpha^2&1\\ 
\bm{1}&\bm{\alpha}&\bm{0}&\bm{1}&\bm{\alpha^2}\\ 
1&\alpha^2&\alpha^2&0&\alpha\\
1&1&\alpha&\alpha&0\\ 
\end{array}
\right]\!,\,
L_{235}=
\left[\begin{array}{c@{\;}c@{\;}c@{\;}c@{\;}c}
0&1&\alpha&\alpha&1\\ 
1&0&\alpha&\alpha^2&\alpha^2\\ 
\bm{1}&\bm{1}&\bm{0}&\bm{1}&\bm{\alpha}\\ 
\bm{1}&\bm{\alpha}&\bm{1}&\bm{0}&\bm{1}\\ 
1&\alpha^2&\alpha^2&\alpha&0\\
\end{array}
\right]\!,\,$$
$$L_{236}=\left[\begin{array}{c@{\;}c@{\;}c@{\;}c@{\;}c}
0&1&\alpha&1&\alpha\\ 
\bm{1}&\bm{0}&\bm{1}&\bm{1}&\bm{\alpha^2}\\
1&\alpha^2&0&\alpha&\alpha\\ 
\bm{1}&\bm{1}&\bm{\alpha^2}&\bm{0}&\bm{1}\\ 
1&\alpha&\alpha&\alpha^2&0\\ 
\end{array}
\right]\!,\,
L_{245}=
\left[\begin{array}{c@{\;}c@{\;}c@{\;}c@{\;}c}
0&1&1&\alpha^2&\alpha^2\\ 
1&0&1&1&\alpha\\ 
1&1&0&\alpha&1\\ 
\bm{1}&\bm{\alpha}&\bm{\alpha^2}&\bm{0}&\bm{\alpha^2}\\ 
\bm{1}&\bm{\alpha^2}&\bm{\alpha}&\bm{\alpha^2}&\bm{0}\\ 
\end{array}
\right]\!,\,
L_{246}=
\left[\begin{array}{c@{\;}c@{\;}c@{\;}c@{\;}c}
\bm{0}&\bm{1}&\bm{1}&\bm{\alpha}&\bm{1}\\
1&0&\alpha^2&\alpha&\alpha\\ 
1&\alpha^2&0&\alpha^2&1\\
\bm{1}&\bm{1}&\bm{\alpha}&\bm{0}&\bm{\alpha^2}\\ 
1&\alpha&1&1&0\\
\end{array}
\right]\!,\,$$
$$L_{256}=\left[\begin{array}{c@{\;}c@{\;}c@{\;}c@{\;}c}
0&1&\alpha^2&\alpha^2&1\\ 
\bm{1}&\bm{0}&\bm{1}&\bm{\alpha}&\bm{1}\\ 
1&\alpha&0&\alpha^2&\alpha^2\\ 
1&\alpha^2&\alpha^2&0&\alpha\\ 
\bm{1}&\bm{1}&\bm{\alpha}&\bm{1}&\bm{0}\\ 
\end{array}
\right]\!,\,
L_{345}=
\left[\begin{array}{c@{\;}c@{\;}c@{\;}c@{\;}c}
\bm{0}&\bm{1}&\bm{1}&\bm{1}&\bm{\alpha}\\ 
1&0&\alpha^2&1&\alpha^2\\ 
1&\alpha^2&0&\alpha&\alpha\\ 
1&1&\alpha&0&1\\ 
\bm{1}&\bm{\alpha}&\bm{1}&\bm{\alpha^2}&\bm{0}\\ 
\end{array}
\right]\!,\,
L_{346}=
\left[\begin{array}{c@{\;}c@{\;}c@{\;}c@{\;}c}
0&1&1&\alpha^2&\alpha^2\\ 
\bm{1}&\bm{0}&\bm{\alpha}&\bm{\alpha}&\bm{\alpha^2}\\
\bm{1}&\bm{\alpha}&\bm{0}&\bm{\alpha^2}&\bm{\alpha}\\ 
1&\alpha^2&1&0&1\\
1&1&\alpha^2&1&0\\ 
\end{array}
\right]\!,\,$$
$$L_{356}=\left[\begin{array}{c@{\;}c@{\;}c@{\;}c@{\;}c}
0&1&\alpha^2&1&\alpha^2\\ 
1&0&\alpha^2&\alpha&\alpha\\ 
\bm{1}&\bm{1}&\bm{0}&\bm{\alpha^2}&\bm{1}\\ 
1&\alpha&\alpha&0&\alpha^2\\ 
\bm{1}&\bm{\alpha^2}&\bm{1}&\bm{1}&\bm{0}\\ 
\end{array}
\right]\!,\,
L_{456}=
\left[\begin{array}{c@{\;}c@{\;}c@{\;}c@{\;}c}
\bm{0}&\bm{1}&\bm{\alpha}&\bm{\alpha}&\bm{\alpha}\\ 
\bm{1}&\bm{0}&\bm{\alpha}&\bm{\alpha^2}&\bm{1}\\ 
1&1&0&1&\alpha^2\\ 
1&\alpha&1&0&\alpha\\ 
1&\alpha^2&\alpha^2&\alpha&0\\ 
\end{array}
\right]\!.$$
The two bolded rows correspond to the points which do not belong to the lines $L_1,\dots,L_6$.
We will need them to describe the elements of the orbit ${\mathcal X}^{0}_{20}$.

Consider a pair $L_{ijk}$, $L_{i'j'k'}$ such that $i,j,k,i',k',j'$ are mutually distinct.
There are precisely two $s,t\in \{1,\dots,5\}$ such that the $s$-th and $t$-th points on each of theses lines 
do not belong to any $L_i$, see the table below.
$$\begin{array}{|c|c|c|}
\hline
i,j,k&i',j',k'&s,t\\ \hline
1,3,6&2,4,5&4,5\\ \hline
1,2,4&3,5,6&3,5\\ \hline
1,4,6&2,3,5&3,4\\ \hline
1,3,4&2,5,6&2,5\\ \hline
1,4,5&2,3,6&2,4\\ \hline
1,2,5&3,4,6&2,3\\ \hline
1,2,6&3,4,5&1,5\\ \hline
1,3,5&2,4,6&1,4\\ \hline
1,5,6&2,3,4&1,3\\ \hline
1,2,3&4,5,6&1,2\\ \hline
\end{array}$$
This pair defines two elements of ${\mathcal X}^{0}_{20}$ which will be denoted by $L_{st}$ and $L_{ts}$.
These simplex lines connect the $s$-th and $t$-th points on $L_{ijk}$ with $t$-th and $s$-th points on $L_{i'j'k'}$,
respectively. 
In this way, we get all $20$ elements of the orbit ${\mathcal X}^{0}_{20}$:
$$L_{12}=\left[\begin{array}{c@{\;}c@{\;}c@{\;}c@{\;}c}
0&1&\alpha^2&\alpha^2&\alpha^2\\ 
1&0&\alpha&\alpha^2&1\\ 
1&\alpha^2&0&1&\alpha^2\\
1&1&1&0&\alpha\\
1&\alpha&\alpha^2&\alpha&0\\
\end{array}
\right]\!,\,
L_{21}=
\left[\begin{array}{c@{\;}c@{\;}c@{\;}c@{\;}c}
0&1&\alpha&\alpha&\alpha\\
1&0&\alpha^2&1&\alpha\\
1&\alpha&0&\alpha&1\\ 
1&\alpha^2&\alpha&0&\alpha^2\\
1&1&1&\alpha^2&0\\ 
\end{array}
\right]\!,\,
L_{13}=
\left[\begin{array}{c@{\;}c@{\;}c@{\;}c@{\;}c}
0&1&\alpha&1&1\\
1&0&\alpha^2&\alpha^2&1\\ 
1&\alpha&0&1&\alpha^2\\ 
1&\alpha^2&\alpha&0&\alpha\\
1&1&1&\alpha&0\\
\end{array}
\right]\!,\,$$

$$L_{31}=\left[\begin{array}{c@{\;}c@{\;}c@{\;}c@{\;}c}
0&1&\alpha^2&1&1\\ 
1&0&\alpha&1&\alpha\\
1&\alpha^2&0&\alpha&1\\
1&1&1&0&\alpha^2\\
1&\alpha&\alpha^2&\alpha^2&0\\ 
\end{array}
\right]\!,\,
L_{14}=
\left[\begin{array}{c@{\;}c@{\;}c@{\;}c@{\;}c}
0&1&1&\alpha^2&1\\
1&0&\alpha^2&\alpha^2&\alpha\\ 
1&\alpha^2&0&1&1\\ 
1&1&\alpha&0&\alpha^2\\
1&\alpha&1&\alpha&0\\ 
\end{array}
\right]\!,\,
L_{41}=
\left[\begin{array}{c@{\;}c@{\;}c@{\;}c@{\;}c}
0&1&1&\alpha&1\\ 
1&0&\alpha&1&1\\ 
1&\alpha&0&\alpha&\alpha^2\\ 
1&\alpha^2&1&0&\alpha\\ 
1&1&\alpha^2&\alpha^2&0\\
\end{array}
\right]\!,\,$$

$$L_{15}=\left[\begin{array}{c@{\;}c@{\;}c@{\;}c@{\;}c}
0&1&1&1&\alpha^2\\ 
1&0&\alpha^2&1&1\\
1&\alpha^2&0&\alpha&\alpha^2\\
1&1&\alpha&0&\alpha\\ 
1&\alpha&1&\alpha^2&0\\ 
\end{array}
\right]\!,\,
L_{51}=
\left[\begin{array}{c@{\;}c@{\;}c@{\;}c@{\;}c}
0&1&1&1&\alpha\\
1&0&\alpha&\alpha^2&\alpha\\ 
1&\alpha&0&1&1\\ 
1&\alpha^2&1&0&\alpha^2\\ 
1&1&\alpha^2&\alpha&0\\ 
\end{array}
\right]\!,\,
L_{23}=
\left[\begin{array}{c@{\;}c@{\;}c@{\;}c@{\;}c}
0&1&\alpha&\alpha^2&\alpha^2\\ 
1&0&\alpha^2&\alpha&\alpha^2\\ 
1&\alpha&0&\alpha^2&\alpha\\ 
1&\alpha^2&\alpha&0&1\\ 
1&1&1&1&0\\
\end{array}
\right]\!,\,$$

$$L_{32}=\left[\begin{array}{c@{\;}c@{\;}c@{\;}c@{\;}c}
0&1&\alpha^2&\alpha&\alpha\\ 
1&0&\alpha&\alpha&\alpha^2\\
1&\alpha^2&0&\alpha^2&\alpha\\ 
1&1&1&0&1\\ 
1&\alpha&\alpha^2&1&0\\ 
\end{array}
\right]\!,\,
L_{24}=
\left[\begin{array}{c@{\;}c@{\;}c@{\;}c@{\;}c}
0&1&\alpha&\alpha^2&\alpha\\ 
1&0&1&\alpha^2&\alpha^2\\ 
1&\alpha^2&0&1&\alpha\\ 
1&1&\alpha^2&0&1\\
1&\alpha&\alpha&\alpha&0\\ 
\end{array}
\right]\!,\,
L_{42}=
\left[\begin{array}{c@{\;}c@{\;}c@{\;}c@{\;}c}
0&1&\alpha^2&\alpha&\alpha^2\\ 
1&0&1&1&\alpha^2\\ 
1&\alpha&0&\alpha&\alpha\\ 
1&\alpha^2&\alpha^2&0&1\\ 
1&1&\alpha&\alpha^2&0\\
\end{array}
\right]\!,\,$$

$$L_{25}=\left[\begin{array}{c@{\;}c@{\;}c@{\;}c@{\;}c}
0&1&\alpha^2&\alpha^2&\alpha\\ 
1&0&1&\alpha&\alpha\\ 
1&\alpha&0&\alpha^2&1\\ 
1&\alpha^2&\alpha^2&0&\alpha^2\\ 
1&1&\alpha&1&0\\ 
\end{array}
\right]\!,\,
L_{52}=
\left[\begin{array}{c@{\;}c@{\;}c@{\;}c@{\;}c}
0&1&\alpha&\alpha&\alpha^2\\ 
1&0&1&\alpha&1\\ 
1&\alpha^2&0&\alpha^2&\alpha^2\\ 
1&1&\alpha^2&0&\alpha\\
1&\alpha&\alpha&1&0\\ 
\end{array}
\right]\!,\,
L_{34}=
\left[\begin{array}{c@{\;}c@{\;}c@{\;}c@{\;}c}
0&1&\alpha&\alpha^2&1\\ 
1&0&\alpha&1&\alpha^2\\ 
1&1&0&\alpha&\alpha\\
1&\alpha&1&0&1\\ 
1&\alpha^2&\alpha^2&\alpha^2&0\\
\end{array}
\right]\!,\,$$

$$L_{43}=\left[\begin{array}{c@{\;}c@{\;}c@{\;}c@{\;}c}
0&1&\alpha^2&\alpha&1\\ 
1&0&\alpha^2&\alpha^2&\alpha^2\\ 
1&1&0&1&\alpha\\ 
1&\alpha&\alpha&0&1\\ 
1&\alpha^2&1&\alpha&0\\ 
\end{array}
\right]\!,\,
L_{35}=
\left[\begin{array}{c@{\;}c@{\;}c@{\;}c@{\;}c}
0&1&\alpha^2&1&\alpha\\ 
1&0&\alpha^2&\alpha&1\\ 
1&1&0&\alpha^2&\alpha^2\\ 
1&\alpha&\alpha&0&\alpha\\ 
1&\alpha^2&1&1&0\\ 
\end{array}
\right]\!,\,
L_{53}=
\left[\begin{array}{c@{\;}c@{\;}c@{\;}c@{\;}c}
0&1&\alpha&1&\alpha^2\\ 
1&0&\alpha&\alpha&\alpha\\
1&1&0&\alpha^2&1\\
1&\alpha&1&0&\alpha^2\\
1&\alpha^2&\alpha^2&1&0\\ 
\end{array}
\right]\!,\,$$

$$L_{45}=\left[\begin{array}{c@{\;}c@{\;}c@{\;}c@{\;}c}
0&1&1&\alpha^2&\alpha\\ 
1&0&1&1&1\\ 
1&1&0&\alpha&\alpha^2\\ 
1&\alpha&\alpha^2&0&\alpha\\ 
1&\alpha^2&\alpha&\alpha^2&0\\ 
\end{array}
\right]\!,\,
L_{54}=
\left[\begin{array}{c@{\;}c@{\;}c@{\;}c@{\;}c}
0&1&1&\alpha&\alpha^2\\ 
1&0&1&\alpha^2&\alpha\\ 
1&1&0&1&1\\ 
1&\alpha&\alpha^2&0&\alpha^2\\ 
1&\alpha^2&\alpha&\alpha&0\\ 
\end{array}
\right]\!.$$

\end{document}